\newcommand{\mathstyle}[2]{%
  \ifthenelse{\equal{#1}{ord}}{\mathord{#2}}{%
    \ifthenelse{\equal{#1}{op}}{\mathop{#2}}{%
      \ifthenelse{\equal{#1}{bin}}{\mathbin{#2}}{%
        \ifthenelse{\equal{#1}{rel}}{\mathrel{#2}}{%
          \ifthenelse{\equal{#1}{open}}{\mathopen{#2}}{%
            \ifthenelse{\equal{#1}{close}}{\mathclose{#2}}{%
              \ifthenelse{\equal{#1}{punct}}{\mathpunct{#2}}{%
                \ifthenelse{\equal{#1}{inner}}{\mathinner{#2}}{#2}%
              }%
            }%
          }%
        }%
      }%
    }%
  }%
}
\newcommand*{\nidt}{\noindent}
\newcommand*{\dst}{\displaystyle}
\newcommand*{\fns}{\footnotesize}
\newcommand*{\ov}[1]{\overline{#1}}
\newcommand*{\wt}[1]{\widetilde{#1}}
\newcommand*{\espf}{\vspace*{1ex}}
\newcommand*{\ii}{\infty}
\newcommand*{\del}{\nabla}
\renewcommand*{\epsilon}{\varepsilon}
\renewcommand*{\phi}{\varphi}
\newcommand*{\C}{\mathbb{C}}
\newcommand*{\Cc}{\mathcal{C}}
\newcommand*{\D}{\mathbb{D}}
\renewcommand*{\H}{\mathbb{H}}
\newcommand*{\Mm}{\mathcal{M}}
\newcommand*{\R}{\mathbb{R}}
\renewcommand*{\S}{\mathbb{S}}
\newcommand*{\Ss}{\mathcal{S}}
\newcommand*{\Uu}{\mathcal{U}}
\newcommand*{\arXiv}{\text{arXiv}}
\newcommand*{\Nil}{\text{Nil}}
\newcommand*{\lp}{\left(}
\newcommand*{\rp}{\right)}
\newcommand*{\lc}{\left[}
\newcommand*{\rc}{\right]}
\newcommand*{\lac}{\left\{}
\newcommand*{\rac}{\right\}}
\newcommand*{\lan}{\langle}
\newcommand*{\ran}{\rangle}
\newcommand*{\lb}{\left|}
\newcommand*{\rb}{\right|}
\newcommand*{\lr}{\left.}
\newcommand*{\rr}{\right.}
\renewcommand*{\leq}{\leqslant}
\renewcommand*{\geq}{\geqslant}
\renewcommand*{\a}{\forall}
\newcommand*{\into}{\rightarrow}
\newcommand*{\st}[1]{%
  \ifthenelse{\equal{#1}{}}{\mathrel{|}}{%
    \ifthenelse{\equal{#1}{big}}{\mathrel{\big{|}}}{%
      \ifthenelse{\equal{#1}{Big}}{\mathrel{\Big{|}}}{%
        \ifthenelse{\equal{#1}{bigg}}{\mathrel{\bigg{|}}}{%
          \ifthenelse{\equal{#1}{Bigg}}{\mathrel{\Bigg{|}}}
        }%
      }%
    }%
  }
}
\newcommand*{\Lim}[2]{\lim_{#1\into#2}}
\renewcommand*{\matrix}[2]{\lp\begin{array}{@{}*{#1}{c}@{}}#2\end{array}\rp}
\newcommand*{\der}[2][]{\frac{\partial#1}{\partial#2}}
\theoremstyle{definition}
\newtheorem{definition}{Definition}[section]
\newtheorem{remark}[definition]{Remark}
\theoremstyle{plain}
\newtheorem{lemma}[definition]{Lemma}
\newtheorem{proposition}[definition]{Proposition}
\newtheorem{theorem}[definition]{Theorem}
\newenvironment{introtheorem}[2][]{\par%
  \bfseries\trivlist%
  \item[\hskip\labelsep{\ifthenelse{\equal{#1}{}}{Theorem}{#1}}~\ref{#2}.]%
  \normalfont\itshape\ignorespaces%
}{\endtrivlist}
\renewenvironment{proof}[1][]{\par%
  \normalfont\trivlist%
  \item[\hskip\labelsep{\itshape Proof}\ifx#1\else{~(#1)}\fi.]%
  \ignorespaces%
}{\hfill~${\square}$\endtrivlist}
\title{Saddle towers in Heisenberg space}
\author{S{\'e}bastien Cartier}
\begin{document}

\maketitle

\begin{abstract}
\nidt We construct most symmetric Saddle towers in Heisenberg space i.e. periodic minimal surfaces that can be seen as the desingularization of vertical planes intersecting equiangularly. The key point is the construction of a suitable barrier to ensure the convergence of a family of bounded minimal disks. Such a barrier is actually a periodic deformation of a minimal plane with prescribed asymptotic behavior. A consequence of the barrier construction is that the number of disjoint minimal graphs suppoerted on domains is not bounded in Heisenberg space.
\end{abstract}

\nidt \textit{Mathematics Subject Classification:} \emph{53A10, 53C42}.

\section{Introduction}

The study of periodic surfaces has recently encountered new developments in homogeneous $3$-spaces. In $\H^2 \times \R$, new examples have been constructed such as for instance doubly periodic minimal surfaces by Mazet, Rodr{\'i}guez and Rosenberg~\cite{MaRoRo} or genus one constant mean curvature $1/ 2$ surfaces by Plehnert~\cite{Pl}. Other examples are involved in important results such as the resolution of Alexandrov problem in a quotient space of $\H^2 \times \R$ by Menezes~\cite{Men} or the fact the Calabi-Yau conjectures do not hold for embedded minimal surface in $\H^2 \times \R$ by Rodr{\'i}guez and Tinaglia~\cite{RoTi}. In Heisenberg space, new examples are mostly of Scherk type like the Jenkins-Serrin theorem for compact domains obtained by Pinheiro~\cite{Pi}.

The present paper deals with the construction of another kind of Scherk type surfaces, called \emph{most symmetric Saddle towers}, in Heisenberg space. A Saddle tower is a minimal surface that can be thought of as the desingularization of $n$ vertical planes, $n \geq 2$, intersecting along a vertical geodesic; in particular it is a complete embedded minimal surface with $2n$ planar ends. Historically, they were first found by Scherk~\cite{Sc} for $n= 2$ in Euclidean space $\R^3$ ---~a Saddle tower with $n= 2$ is usually called a \emph{singly periodic Scherk surface}~--- and (more than) a century and half later Karcher~\cite{Kar} generalized the construction to any $n \geq 2$. Another couple of decades later, Morabito and Rodr{\'i}guez~\cite{MoRo} and Pyo~\cite{Py} constructed Saddle towers in $\H^2 \times \R$. Recently, Menezes~\cite{Men3} constructed the singly (and doubly) periodic Scherk surfaces in semi-direct product spaces, including Heisenberg space.

\medskip

We use a classical method to construct Saddle towers in Heisenberg space (see Section~\ref{sec:saddle}). We first construct a sequence of embedded minimal disks ---~by solving Plateau problems on suitable Jordan curves~--- such that it converges to an embedded minimal surface bordered by horizontal geodesic arcs, which we call the \emph{fundamental piece}. Reflecting the fundamental piece along the geodesics in its boundary, we get the desired Saddle tower:

\begin{introtheorem}{thm:saddle}
For any $n \geq 2$ and $a> 0$, there exists a properly embedded singly periodic minimal surface of genus zero with $2n$ planar ends distributed at constant angle $\pi/ n$. We call it a \emph{most symmetric Saddle tower}.
\end{introtheorem}

Since the Saddle tower is composed of reflections of the fundamental piece, its ends must be equiangularly distributed to ensure the embeddedness of the surface. Note that Morabito and Rodr{\'i}guez~\cite{MoRo} constructed non equiangular Saddle towers with a conjugate Plateau technique, which does not apply in Heisenberg space.

The key step is to prove the convergence of the sequence of minimal disks, which is done by using a suitable barrier. We construct that barrier in Section~\ref{sec:barrier} by deforming a minimal plane so that we control the asymptotic behavior of the deformed minimal surfaces. The deformation technique is based on the existence of an elliptic operator, called the \emph{compactified mean curvature operator}, containing information on both the asymptotic behavior and the mean curvature. This technique has already been used by the author and Hauswirth~\cite{CaHa} for surfaces in $\H^2 \times \R$ of constant mean curvature $1/ 2$.

It is an immediate consequence of Weierstrass representation that the construction of such surfaces is not possible in Euclidean space since ---~under reasonable asumptions on the total curvature~--- minimal ends are asymptotically rotational. The present barrier construction highlights another difference with Euclidean case:

\begin{introtheorem}{thm:disjgraphs}
In Heisenberg space, the number of disjoint domains supporting minimal vertical graphs is unbounded.
\end{introtheorem}

\section{Preliminaries}

We recall basic definitions and properties on Heisenberg space and fix classic notations.

\subsection{A model for Heisenberg space}

$3$-dimensional Heisenberg space $\Nil_3$ is a nilpotent Lie group which is usually represented in $\text{GL}_3(\R)$ by:
\[
\Nil_3= \lac \lr \matrix{3}{1 & a & c \\ 0 & 1 & b \\ 0 & 0 & 1}~\rb~a, b, c \in \R \rac,
\]
and its Lie algebra $\mathfrak{nil}_3$ can be seen as the subset of $\Mm_3(\R)$:
\[
\mathfrak{nil}_3= \lac \lr \matrix{3}{0 & a & c \\ 0 & 0 & b \\ 0 & 0 & 0}~\rb~a, b, c \in \R \rac
\]
In this paper we use the parametrization of $\Nil_3$ induced by the exponential map:
\[
\text{exp}: \matrix{3}{0 & x_1 & x_3 \\ 0 & 0 & x_2 \\ 0 & 0 & 0} \in \mathfrak{nil}_3 \mapsto \matrix{3}{1 & x_1 & \dst x_3+ \frac{x_1x_2}{2} \\ 0 & 1 & x_2 \\ 0 & 0 & 1} \in \Nil_3.
\]
With these notations, $\Nil_3$ identifies with $\R^3$ and the group law is:
\[
(x_1, x_2, x_3)*(y_1, y_2, y_3)= \lp x_1+ y_1, x_2+ y_2, x_3+ y_3+ \frac{x_1y_2- x_2y_1}{2} \rp.
\]
We call \emph{canonical frame} the frame $(E_1, E_2, E_3)$ which is the extension by left translation of the canonical frame of $\R^3$ at the origin i.e.:
\[
E_1= \der{x_1}- \frac{x_2}{2} \der{x_3}, \quad E_2= \der{x_2}+ \frac{x_1}{2} \der{x_3} \quad \text{and} \quad E_3= \der{x_3}.
\]
Endowing $\Nil_3$ with the following left-invariant metric:
\[
\lan \cdot, \cdot \ran= dx_1^2+ dx_2^2+ \lp \frac{1}{2} (x_2dx_1- x_1dx_2)+ dx_3 \rp^2,
\]
makes the canonical frame orthonormal.

Heisenberg space is a riemannian fibration for the projection on the first two coordinates $\pi: (x_1, x_2, x_3) \in \Nil_3 \mapsto (x_1, x_2) \in \R^2$, which means that the isomorphism $d\pi|_{(\ker d\pi)^{\bot}}$ onto the Euclidean plane is an isometry. Given this structure, vector fields spanned by $(E_1, E_2)$ are referred to as \emph{horizontal} and the direction $E_3$ is said \emph{vertical}. In particular, a \emph{vertical graph} in $\Nil_3$ is a complete immersion transverse to $E_3$.

In the sequel, we mostly work in cylindrical coordinates $(\rho, \theta, x_3)$ with $\rho \geq 0$ and $\theta \in \R$ such that:
\[
x_1= \rho \cos \theta \quad \text{and} \quad x_2= \rho \sin \theta,
\]
and we consider the \emph{cylindrical frame} $(E_{\rho}, E_{\theta}, E_3)$ with:
\begin{gather*}
E_{\rho}= \cos \theta E_1+ \sin \theta E_2= \cos \theta \der{x_1}+ \sin \theta \der{x_2} \\
\text{and} \quad E_{\theta}= -\sin \theta E_1+ \cos \theta E_2= -\sin \theta \der{x_1}+ \cos \theta \der{x_2}+ \frac{\rho}{2} \der{x_3}.
\end{gather*}
The cylindrical frame is also orthonormal and the Levi-Civita connection $\del$ on $\Nil_3$ in terms of $(E_{\rho}, E_{\theta}, E_3)$ is given by:
\[
\begin{array}{lll}
\del_{E_{\rho}} E_{\rho}= 0                       & \dst \del_{E_{\theta}} E_{\rho}= \frac{1}{\rho} E_{\theta}- \frac{1}{2} E_3 & \dst \del_{E_3} E_{\rho}= -\frac{1}{2} E_{\theta} \espf \\
\dst \del_{E_{\rho}} E_{\theta}= \frac{1}{2} E_3  & \dst \del_{E_{\theta}} E_{\theta}= -\frac{1}{\rho} E_{\rho}                 & \dst \del_{E_3} E_{\theta}= \frac{1}{2} E_{\rho}  \espf \\
\dst \del_{E_{\rho}} E_3= -\frac{1}{2} E_{\theta} & \dst \del_{E_{\theta}} E_3= \frac{1}{2} E_{\rho}                            & \del_{E_3} E_3= 0
\end{array}
\]

\subsection{Model surface}

The barrier constructed in Section~\ref{sec:barrier} comes from a (suitable) deformation of a surface, labeled $S^0$, to which we refer as the \emph{model surface}. In our model of $\Nil_3$, $S^0$ is nothing but the entire graph $\lac x_3= 0 \rac$. It is a complete embedded minimal surface, its tangent plane is spanned by the coordinate vector fields:
\[
\der{x_1}= E_1+ \frac{x_2}{2} E_3 \quad \text{and} \quad \der{x_2}= E_2- \frac{x_1}{2} E_3,
\]
and has unit normal:
\[
N^0= \frac{1}{\sqrt{4+ \rho^2}} (\rho E_{\theta}+ 2 E_3).
\]
The end of $S^0$ is of annulus type and is vertical in the sense that the normal vector $N^0$ is asymptotically horizontal: $\lan N^0, E_3 \ran \into 0$ when $\rho \into +\ii$.

\medskip

Consider a point $x^0= (x^0_1, x^0_2, x^0_3)$ in $\Nil_3$ and a unit vector $V$ in the tangent space $T_{x^0} \Nil_3$ at $x^0$, written $V= R\cos \phi E_1+ R\sin \phi E_2+ \gamma E_3$ with $R \geq 0$, $\phi, \gamma \in \R$ and $R^2+ \gamma^2= 1$. The geodesic passing through $x^0$ and directed by $V$ at $x^0$ admits the parametrization $t \in \R \mapsto \big{(} x_1(t), x_2(t), x_3(t) \big{)}$ by arc length with (see~\cite{Mar} for details):
\[
\lac \begin{array}{l}
\dst x_1(t)= x^0_1+ \frac{R}{2\gamma} \big{(} \sin(2\gamma t+ \phi)- \sin \phi \big{)} \espf\\
\dst x_2(t)= x^0_2- \frac{R}{2\gamma} \big{(} \cos(2\gamma t+ \phi)- \cos \phi \big{)} \espf\\
\dst x_3(t)= x^0_3- \frac{R}{4\gamma} \Big{[} x^0_1 \big{(} \cos(2\gamma t+ \phi)- \cos \phi \big{)}+ x^0_2 \big{(} \sin(2\gamma t+ \phi)- \sin \phi \big{)} \Big{]} \espf\\
\dst \hspace{22em}+ \frac{1+ \gamma^2}{2\gamma} t- \frac{R^2}{4\gamma^2} \sin(2\gamma t)
\end{array} \rr.
\]If $x^0 \in S^0$ and $V= N^0$, meaning:
\[
R= \frac{\rho}{\sqrt{4+ \rho^2}}, \quad \phi= \theta+ \frac{\pi}{2} \quad \text{and} \quad \gamma= \frac{2}{\sqrt{4+ \rho^2}},
\]
with $x^0_1= \rho \cos\theta$, $x^0_2= \rho \sin\theta$ and $x^0_3= 0$, the parametrization writes:
\begin{equation} \label{eq:equidist}
\lac \begin{array}{l}
\dst x_1(t)= \rho \cos\theta+ \frac{\rho}{4} \lc \cos \lp \frac{4t}{\sqrt{4+ \rho^2}}+ \theta \rp- \cos \theta \rc \espf\\
\dst x_2(t)= \rho \sin\theta+ \frac{\rho}{4} \lc \sin \lp \frac{4t}{\sqrt{4+ \rho^2}}+ \theta \rp- \sin \theta \rc \espf\\
\dst x_3(t)= \frac{\rho^2}{16} \sin \lp \frac{4t}{\sqrt{4+ \rho^2}} \rp+ \frac{8+ \rho^2}{16} \frac{4t}{\sqrt{4+ \rho^2}} \espf
\end{array} \rr.
\end{equation}
Fixing $t \neq 0$ and letting $(\rho, \theta)$ vary, System~\eqref{eq:equidist} becomes a parametrization of the equidistant surface to $S^0$ at signed (normal) distance $t$. And making $\rho \into +\ii$, we see that asymptotically the equidistant surface grows linearly with respect to $\rho$ ---~namely, the equidistant surface asymptotically behaves like the quadric:
\[
t^2 x_1^2+ t^2 x_2^2- 4x_3^2= t^2 \lp 4- \frac{t^2}{3} \rp.
\]

The growth of equidistant surfaces to the model surface is important in Section~\ref{sec:barrier}. Indeed, the information we have on the asymptotic behavior of the deformations of $S^0$ is precisely the asymptotic normal signed distance to $S^0$. 

\begin{definition} \label{def:asympdist}
Consider a surface $S$ which can be parametrized by:
\[
(\rho, \theta) \mapsto \big{(} \rho \cos\theta, \rho \sin\theta, h(\rho, \theta) \big{)},
\]
at least for $\rho$ big enough and some $\theta \in \R$. When it exists, the \emph{asymptotic horizontal signed distance} of $S$ to the model surface $S^0$ in the direction $\theta$ is the quantity:
\[
d_{\ii}(S, S^0)(\theta)= \Lim{\rho}{+\ii} \frac{2}{\rho} h(\rho, \theta);
\]
the qualification ``horizontal'' coming from the fact that $N^0$ is asymptotically horizontal.
\end{definition}

\subsection{Schwarz symmetrization}

The classical Schwarz reflection for minimal surfaces in Euclidean space states that if a minimal immersion contains a straight line in its boundary, it can be smoothly extended across that line by the $\pi$-rotation about the line. This result applies to Heisenberg space ---~and in general to homogeneous $3$-spaces with $4$-dimensional isometry group~--- if restricted to extensions across horizontal or vertical geodesics (see~\cite{AbRo2} and details in~\cite{Rosen, MaTo}). Moreover, in Heisenberg space the geodesic reflection along a horizontal geodesic passing through the vertical axis $\lac x_1= x_2= 0 \rac$ is exactly the euclidean $\pi$-rotation about the geodesic.

\begin{comment}
In Heisenberg space, we make these reflections explicit. Considering a horizontal or vertical geodesic, there exists a unique isometry fixing the geodesic pointwise, which we call a \emph{geodesic reflection}. For a horizontal geodesic passing through $(x^0_1, x^0_2, x^0_3)$ and directed by $V= \cos\phi E_1+ \sin\phi E_2$, the geodesic reflection is:
\begin{multline*}
(x_1, x_2, x_3) \mapsto \Bigg{(} x^0_1+ (x_1- x^0_1)\cos(2\phi)+ (x_2- x^0_2)\sin(2\phi), \\
x^0_2+ (x_1- x^0_1)\sin(2\phi)- (x_2- x^0_2)\cos(2\phi), \\
2x^0_3- x_3+ \frac{x^0_1 \big{(} x_2+ (x_1- x^0_1)\sin(2\phi)- (x_2- x^0_2)\cos(2\phi) \big{)}- x^0_2 \big{(} x_1+ (x_1- x^0_1)\cos(2\phi)+ (x_2- x^0_2)\sin(2\phi) \big{)}}{2} \Bigg{)}.
\end{multline*}
And if the geodesic is vertical ---~thus directed by $V= E_3$~--- passing through $(x^0_1, x^0_2, x^0_3)$, the reflection writes:
\[
(x_1, x_2, x_3) \mapsto \lp x^0_1- (x_1- x^0_1), x^0_2- (x_2- x^0_2), x_3- (x^0_1x_2- x^0_2x_1) \rp.
\]
\end{comment}

Consider a minimal surface $\Sigma$ bordered only by horizontal and vertical geodesic arcs. Then $\Sigma$ can be extended to a smooth minimal surface $\hat{\Sigma}$ by geodesic reflections along each geodesic in the boundary of $\Sigma$. The boundary of $\hat{\Sigma}$ also only contains horizontal and vertical geodesic arcs and the extension process can be iterated. Note that a meeting point of several geodesic arcs of the boundary of $\Sigma$ is a removable singularity of the extended surface $\hat{\Sigma}$ and thus $\hat{\Sigma}$ is smooth at this point (see~\cite{ChSc, Gu}).

We use Schwarz symmetrization in Section~\ref{sec:saddle} to extend the fundamental piece to a complete embedded smooth 	minimal surface with the desired symmetry properties.

\subsection{Some notations}

Let $\D= \lac z \in \C \st{big} |z|< 1 \rac$ be the open unit disk, $\ov{\D}= \lac z \in \C \st{big} |z| \leq 1 \rac$ its closure and $(r, \theta)$ the polar coordinates on $\ov{\D}$. The boundary $\partial\D$ of $\D$ is identified with $\S^1$.

The space $\Cc^{k, \alpha}(\ov{\D})$, with $k \geq 0$ and $0< \alpha< 1$, is the usual H{\"o}lder space over $\ov{\D}$ and $\Cc^{k, \alpha}_0(\ov{\D})$ is the subspace of $\Cc^{k, \alpha}(\ov{\D})$ of functions that are zero on the boundary of $\D$.

Finally, we consider the spaces $L^2(\cdot)$ endowed with the natural scalar product denoted $\lan \cdot, \cdot \ran_{L^2(\cdot)}$ and Hilbert norm $|\cdot|_{L^2(\cdot)}$.

\section{Barrier construction} \label{sec:barrier}

The model surface $S^0$ is an embedded disk. We deform it by mean of a differential operator taking into account both the asymptotic behavior and the mean curvature. The construction of this operator is called a \emph{compactification} of the mean curvature since it is based on a conformal change of the induced metric to extend it on the boundary of the parametrizing disk. To do so, we need a conformal parametrization of $S^0$ which writes (in polar coordinates):
\[
X^0: (r, \theta) \in \D \mapsto \lp \frac{4r}{1- r^2} \cos\theta,  \frac{4r}{1- r^2} \sin\theta, 0 \rp.
\]
We parametrize entire vertical graphs of $\Nil_3$ by:
\[
X^{\eta}: (r, \theta) \in \D \mapsto \lp \frac{4r}{1- r^2} \cos\theta,  \frac{4r}{1- r^2} \sin\theta, \eta(r, \theta) \frac{1+ r^2}{1- r^2} \rp,
\]
for some map $\eta: \D \into \R$. We call such a parametrization \emph{graph coordinates at infinity}.

In the sequel, we are interested in graphs such that $\eta \in \Cc^{2, \alpha}(\ov{\D})$ and we use graph coordinates at infinity to compactify surfaces and quantify their asymptotic behavior. Indeed, using Definition~\ref{def:asympdist} a graph $X^{\eta}$ with $\eta \in \Cc^{2, \alpha}(\ov{\D})$ is at asymptotic horizontal signed distance to the model surface $d_{\ii}(X^{\eta}, S^0)(\theta)= \eta(1, \theta)$. Surfaces are thus considered as \emph{compact surfaces with boundary} and we can apply the method first developed by White \cite{Wh}.Also note that the value $\eta|_{\partial \D}$ is invariant under vertical translations.

\subsection{Compactification of the mean curvature}

From now on, to ease the notations, we denote with indexes $1, 2$ quantities related to coordinates $r, \theta$ respectively. 

\begin{theorem} \label{thm:compcourb}
For any entire graph admitting graph coordinates $X^{\eta}$ with $\eta \in \Cc^{2, \alpha}(\ov{\D})$, the mean curvature $H(\eta)$ verifies:
\begin{equation} \label{eq:comph}
\frac{2}{r} \sqrt{|g(0)|} H(\eta)= \sum_{i, j} A_{ij}(r, \eta, D\eta) \eta_{ij}+ B(r, \eta, D\eta),
\end{equation}
where $|g(0)|$ is the determinant of the metric induced by $X^0$, $A_{ij}$ and $B$ are $\Cc^{0, \alpha}$ functions on $\ov{\D}$ which are real-analytic in their variables and $A= (A_{ij})$ is a coercive matrix on $\ov{\D}$.
\end{theorem}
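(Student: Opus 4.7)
My approach is a direct computation of the mean curvature of $X^{\eta}$ in the orthonormal cylindrical frame $(E_{\rho}, E_{\theta}, E_3)$. The specific prefactor $2/(r\sqrt{|g(0)|})$ on the left of~\eqref{eq:comph} is not a cosmetic normalization: it is engineered precisely so that the blow-up of the intrinsic geometry of $X^{\eta}$ at $r=1$ (the image of spatial infinity) is absorbed exactly, and the proof amounts to verifying this cancellation while extracting the quasilinear structure.

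Setting $\rho(r) = 4r/(1-r^2)$ and $\psi(r) = (1+r^2)/(1-r^2)$, so that $X^{\eta} = (\rho\cos\theta,\rho\sin\theta,\eta\psi)$, the tangent vectors decompose in the orthonormal frame as
\[
X^{\eta}_r = \rho_r E_{\rho} + (\eta_r\psi + \eta\psi_r) E_3, \qquad X^{\eta}_{\theta} = \rho E_{\theta} + \lp \eta_{\theta}\psi - \frac{\rho^2}{2} \rp E_3,
\]
the $-\rho^2/2$ contribution coming from the $(\rho/2)\partial_{x_3}$ term in the expression of $E_{\theta}$. From these one reads the first fundamental form $g_{ij}$ and, after normalizing the cross product $X^{\eta}_r \times X^{\eta}_{\theta}$, the unit normal $N^{\eta}$, both polynomial in $(\eta,\eta_r,\eta_{\theta})$ with coefficients rational in $r$. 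Applying the Levi-Civita connection table from Section~2.1 to compute $\del_{X^{\eta}_i} X^{\eta}_j$ and projecting onto $N^{\eta}$ gives the second fundamental form $h_{ij}$; the key structural observation is that the second-order derivatives $\eta_{rr}, \eta_{r\theta}, \eta_{\theta\theta}$ enter only through the $E_3$-component of $\del_{X^{\eta}_i} X^{\eta}_j$, and linearly. The identity $2H = g^{ij}h_{ij}$ thus produces an equation of the form $2\sqrt{|g|}\, H = \wt A_{ij}(r,\eta,D\eta)\,\eta_{ij} + \wt B(r,\eta,D\eta)$, with $\wt A_{ij}, \wt B$ polynomial in $(\eta, D\eta)$ and rational in $r$, the denominators being powers of $1-r^2$.

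The main work is then to verify that the rescaling by $2/(r\sqrt{|g(0)|})$ removes every singularity of these expressions. At $r=0$ the usual polar-coordinate singularity appearing in the $\eta_{\theta\theta}$ term is absorbed by the $1/r$ factor, and a passage to Cartesian coordinates confirms $\Cc^{0,\alpha}$ regularity near the origin. At $r=1$, the explicit factorization $|g(0)| = \rho_r^2(\rho^2 + \rho^4/4)$ contains precisely the powers of $(1-r^2)^{-1}$ needed to cancel those appearing in $\wt A_{ij}$ and $\wt B$; this matching is what motivated the choice $\psi = (1+r^2)/(1-r^2)$ in the very definition of graph coordinates at infinity. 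The bookkeeping on $(1-r^2)$-powers at $r=1$ is the most delicate part of the proof and constitutes its real content. Real analyticity in $(\eta, D\eta)$ is then automatic, the only non-polynomial contribution being the factor $\sqrt{|g(0)|}/\sqrt{|g|}$, which is a real-analytic function of its arguments and stays bounded away from zero since $X^{\eta}$ is an immersion.

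Coercivity of $A = (A_{ij})$ follows from the fact that, up to the positive conformal factor introduced by the rescaling, it is the inverse of $(g_{ij})$, hence pointwise positive-definite at every $(r,\theta) \in \D$. The limiting form at $r=1$ is read off from the leading $(1-r^2)$-powers computed above; it depends only on $\eta(1,\theta)$ and $\eta_{\theta}(1,\theta)$ and is manifestly positive-definite. Compactness of $\ov{\D}$ upgrades this pointwise positive-definiteness to uniform coercivity, completing the proof.
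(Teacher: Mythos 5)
Your proposal follows essentially the same route as the paper's proof in Appendix~\ref{sec:proof}: express $X^{\eta}$ in the orthonormal cylindrical frame, compute $g_{ij}(\eta)$, $N^{\eta}$, and the conormal derivatives from the connection table, observe that the $\eta_{ij}$ enter only linearly through the $E_3$-components, factor $|g(\eta)|= |g(0)|\,w^2(\eta)$, and show the prefactor $2\sqrt{|g(0)|}/r$ cancels all the $(1-r^2)$-powers; the coercivity of $A$ as a positive multiple of $g^{-1}$ is likewise the same observation as the paper's identity $A^{-1}= |g(0)|^{-1/2}g(\eta)+ O(1-r^2)$. Two minor imprecisions that do not affect the structure but should be tidied: the quantity $2\sqrt{|g|}\,H$ is not polynomial in $(\eta, D\eta)$ with coefficients rational in $r$ as written (the factor $1/w(\eta)$ is algebraic, not rational, a point you correctly acknowledge later as the sole non-polynomial contribution), and the boundary limit of $A$ at $r=1$ depends only on $\eta(1,\theta)$, not on $\eta_{\theta}(1,\theta)$.
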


\begin{proof}[See Appendix~\ref{sec:proof} for computation details]
The definition of the mean curvature of $X^{\eta}$ is the following:
\[
2H(\eta)= \sum_{i, j} g^{ij}(\eta) \lan \del_{X^{\eta}_i} X^{\eta}_j, N^{\eta} \ran,
\]
where $X^{\eta}_1$ (resp. $X^{\eta}_2$) is the derivative of $X^{\eta}$ with respect to $r$ (resp. $\theta$), $N^{\eta}$ is the unit normal to $X^{\eta}$ and $\big{(} g^{ij}(\eta) \big{)}$ is the inverse matrix of the metric $g(\eta)= \big{(} g_{ij}(\eta) \big{)}$ induced by $X^{\eta}$. Namely, we have:
\begin{gather*}
X^{\eta}_1(r, \theta)= \frac{4(1+ r^2)}{(1- r^2)^2} E_{\rho}+ \frac{4r}{(1- r^2)^2} \lp \eta+ \frac{(1+ r^2)\eta_1}{4r} (1- r^2) \rp E_3 \\
\text{and} \quad X^{\eta}_2(r, \theta)= \frac{4r}{1- r^2} E_{\theta}- \frac{8r^2}{(1- r^2)^2} \lp 1- \frac{(1+ r^2)\eta_2}{8r^2} (1- r^2) \rp E_3,
\end{gather*}
with $(E_{\rho}, E_{\theta}, E_3)$ the cylindrical frame. The first fundamental form $g(\eta)$ is:
\begin{gather*}
g_{11}(\eta)= \frac{16(1+ r^2)^2}{(1- r^2)^4} \lc \lp 1+ \frac{r^2 \eta^2}{(1+ r^2)^2} \rp+ \frac{r\eta \eta_1}{2(1+ r^2)} (1- r^2)+ \frac{\eta_1^2}{16} (1- r^2)^2 \rc, \\
g_{12}(\eta)= -\frac{32r^3}{(1- r^2)^4} \lc \eta+ \frac{1+ r^2}{4r} \lp \eta_1- \frac{\eta \eta_2}{2r} \rp (1- r^2)- \frac{(1+ r^2)^2 \eta_1 \eta_2}{32r^3} (1- r^2)^2 \rc \\
\text{and} \quad g_{22}(\eta)= \frac{16r^2(1+ r^2)^2}{(1- r^2)^4} \lc 1- \frac{\eta_2}{1+ r^2} (1- r^2)+ \frac{\eta_2^2}{16r^2} (1- r^2)^2 \rc,
\end{gather*}
and its determinant $|g(\eta)|$ writes:
\[
|g(\eta)|= \lp \frac{16r(1+ r^2)^2}{(1- r^2)^4} w(\eta) \rp^2= |g(0)|w^2(\eta),
\]
with the following expression of $w(\eta)$:
\begin{multline*}
w(\eta)= \Bigg{[} 1- \frac{\eta_2}{1+ r^2} (1- r^2)+ \lp \frac{r^2 \eta^2}{(1+ r^2)^4}+ \frac{\eta_2^2}{16} \rp (1- r^2)^2+ \frac{r\eta \eta_1}{2(1+ r^2)^3} (1- r^2)^3 \\
+\frac{\eta_1^2}{16(1+ r^2)^2} (1- r^2)^4 \Bigg{]}^{1/ 2}.
\end{multline*}
Also the expression of the unit normal $N^{\eta}$ is:
\begin{multline*}
N^{\eta}= \frac{1}{\sqrt{|g(\eta)|}} X^{\eta}_1 \times X^{\eta}_2= \frac{1}{w(\eta)} \Bigg{[} -\frac{r(1- r^2)}{(1+ r^2)^2} \lp \eta+ \frac{(1+ r^2)\eta_1}{4r} (1- r^2) \rp E_{\rho} \\
+\frac{2r}{1+ r^2} \lp 1- \frac{(1+ r^2) \eta_2}{8r^2} (1- r^2) \rp E_{\theta}+ \frac{1- r^2}{1+ r^2} E_3 \Bigg{]}.
\end{multline*}

The computation detailed in Appendix~\ref{sec:proof} gives the expression~\eqref{eq:comph} with the desired regularity and:
\[
A_{11}= 1+ O(1- r^2), \quad A_{12}= A_{21}= \frac{\eta}{2}+ O(1- r^2) \quad \text{and} \quad A_{22}= 1+ \frac{\eta^2}{4}+ O(1- r^2),
\]
which shows that $A$ is coercive on $\ov{\D}$.
\end{proof}

We call such a process a \emph{compactification} of the mean curvature since the quantity $r^{-1}\sqrt{g(0)}H(\eta)$ can be extended to the boundary $\partial\D$. It is also strongly linked with the compactification of the induced metric $g(\eta)$ by the following equality:
\[
A^{-1}= \matrix{2}{1+ \eta^2/ 4 & \eta/ 2 \\ \eta/ 2 & 1}+ O(1- r^2)= \frac{1}{\sqrt{|g(0)|}} g(\eta)+ O(1- r^2).
\]

\medskip

From now on, we denote $H$ the operator:
\[
H: \eta \in \Cc^{2, \alpha}(\ov{\D}) \mapsto H(\eta) \in \Cc^{0, \alpha}(\ov{\D}),
\]
where $H(\eta)$ is the mean curvature of $X^{\eta}$, and we call it the \emph{mean curvature operator}. Using Theorem~\ref{thm:compcourb}, we define the \emph{compactified mean curvature operator} to be:
\[
\ov{H}: \eta \in \Cc^{2, \alpha}(\ov{\D}) \mapsto \frac{2}{r} \sqrt{|g(0)|} H(\eta) \in \Cc^{0, \alpha}(\ov{\D}).
\]

The \emph{compactified Jacobi operator} is $\ov{L}= D\ov{H}(0): \Cc^{2, \alpha}(\ov{\D}) \into \Cc^{0, \alpha}(\ov{\D})$. We get that:
\[
\ov{L}= \frac{\sqrt{|g(0)|}}{r} L,
\]
where $L$ is the Jacobi operator of $S^0$, since it is a standard fact that:
\[
\a \eta \in \Cc^{2, \alpha}(\ov{\D}),\ DH(0) \cdot \eta= \frac{1}{2} L\eta.
\]
Furthermore, conducting the computation in Appendix~\ref{sec:proof} more carefully, we get an explicit expression of $\ov{L}$:
\[
\a \eta \in \Cc^{2, \alpha}(\ov{\D}),\ \ov{L}\eta= \Delta\eta+ \frac{8\eta}{(1+ r^2)^2},
\]
where $\Delta$ stands for the usual flat laplacian.

We highlight two immediate consequences. First, the vertical coordinate $\phi^0$ of $N^0$ ---~meaning $\phi^0= \lan N^0, E_3 \ran$ ~--- is in the kernel of $\ov{L}$ i.e. $\ov{L}\phi^0= 0$. And second, $\ov{L}$ verifies the following Green identity:
\begin{equation} \label{eq:green}
\a u, v \in \Cc^{2, \alpha}(\ov{\D}),\ \int_{\ov{\D}} \lp u\ov{L} v- v\ov{L} u \rp d\ov{A}= \int_0^{2\pi} \lr \lp u \der[v]{r}- v \der[u]{r} \rp \rb_{r= 1} d\theta,
\end{equation}
with $d\ov{A}$ the Lebesgue measure on $\ov{\D}$. We deduce the following lemma:

\begin{lemma} \label{lem:nosol}
There is no solution $u \in \Cc^{2, \alpha}(\ov{\D})$ to the equation:
\[
\lac \begin{array}{ll}
\ov{L} u= 0         & \text{on } \ov{\D} \\
u|_{\partial \D}= 1
\end{array} \rr.
\]
\end{lemma}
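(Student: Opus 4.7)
The plan is to test the putative solution $u$ against the Jacobi field $\phi^0 = \langle N^0, E_3\rangle$ via the Green identity~\eqref{eq:green}. This is natural since $\phi^0$ already appears in the excerpt as an element of $\ker \overline{L}$, and it is the canonical object to pair with a Dirichlet problem for $\overline{L}$.

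First, I would write $\phi^0$ explicitly in graph coordinates at infinity. From $\phi^0 = 2/\sqrt{4+\rho^2}$ and the reparametrization $\rho = 4r/(1-r^2)$, a direct computation gives
\[
4+\rho^2 = \frac{4(1+r^2)^2}{(1-r^2)^2}, \qquad \phi^0(r,\theta) = \frac{1-r^2}{1+r^2}.
\]
As a rational function of $r^2$, this $\phi^0$ lies in $\Cc^{2,\alpha}(\ov{\D})$, and on the boundary one has $\phi^0|_{\partial\D} = 0$ together with $\der[\phi^0]{r}|_{r=1} = -1$. (As a sanity check, one can verify directly that $\Delta \phi^0 + 8\phi^0/(1+r^2)^2 = 0$, confirming $\ov{L}\phi^0 = 0$.)

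Now assume, for contradiction, that a solution $u \in \Cc^{2,\alpha}(\ov{\D})$ exists. Apply the Green identity~\eqref{eq:green} to the pair $(u, \phi^0)$: both functions lie in the kernel of $\ov{L}$, so the left hand side vanishes. On the right hand side, using $u|_{\partial\D} = 1$, $\phi^0|_{\partial\D} = 0$ and $\der[\phi^0]{r}|_{r=1} = -1$, the boundary integral reduces to
\[
\int_0^{2\pi} \lp 1 \cdot (-1) - 0 \cdot \der[u]{r}\big|_{r=1} \rp d\theta = -2\pi,
\]
which contradicts the vanishing of the left hand side. Hence no such $u$ exists. There is no serious obstacle here: the only step requiring care is the explicit computation of $\phi^0$ on $\ov{\D}$ so that the boundary data of the Green identity can be read off, and this is a short algebraic manipulation.
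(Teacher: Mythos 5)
Your proof is correct and follows exactly the paper's argument: pair $u$ with the Jacobi field $\phi^0 = (1-r^2)/(1+r^2)$ via the Green identity~\eqref{eq:green}, note both sides lie in $\ker\ov{L}$ so the area term vanishes, and evaluate the boundary term using $\phi^0|_{r=1}=0$ and $\der[\phi^0]{r}|_{r=1}=-1$ to obtain $\pm 2\pi \neq 0$. The only difference is the order of $u$ and $\phi^0$ in the pairing, which flips the sign of the resulting nonzero constant but not the conclusion.
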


\begin{proof}
By contradiction, suppose such a $u$ exist and apply the Green identity~\eqref{eq:green} to $\phi^0$ and $u$:
\begin{align*}
0 & = \int_{\ov{\D}} \lp \phi^0 \ov{L} u- u\ov{L} \phi^0 \rp d\ov{A}= \int_0^{2\pi} \lr \lp \phi^0 \der[u]{r}- u \der[\phi^0]{r} \rp \rb_{r= 1} d\theta \\
  & = \int_0^{2\pi} d\theta= 2\pi,
\end{align*}
since:
\[
\phi^0|_{r= 1}= \lr \frac{1- r^2}{1+ r^2} \rb_{r= 1}= 0 \quad \text{and} \quad \lr \der[\phi^0]{r} \rb_{r= 1}= \lr -\frac{4r}{(1+ r^2)^2} \rb_{r= 1}= -1.
\]
This is impossible.
\end{proof}

Consider the restriction $\ov{L}_0$ of $\ov{L}$ to $\Cc^{2, \alpha}_0(\ov{\D})$ and $K= \ker \ov{L}_0$. We use the inclusions $\Cc^{2, \alpha}_0(\ov{\D}) \subset \Cc^{0, \alpha}(\ov{\D}) \subset L^2(\D)$ and denote $K^{\bot}$ the orthogonal to $K$ in $\Cc^{0, \alpha}(\ov{\D})$ for the natural scalar product of $L^2(\D)$ and $K_0^{\bot}= K^{\bot} \cap \Cc^{2, \alpha}_0(\ov{\D})$.

A standard result states that the restriction $\ov{L}_0$ is a Fredholm operator with index zero (see \cite{GiTr}) and furthermore $K= \R \phi^0$ and $\ov{L}_0 \big{(} \Cc^{2, \alpha}_0(\ov{\D}) \big{)}= K^{\bot}$.

\subsection{Deformations of the model surface}

Let $\mu: \Cc^{2, \alpha}(\S^1) \into \Cc^{2, \alpha}(\ov{\D})$ be the operator such that $\mu(\gamma)$ is the harmonic function on $\ov{\D}$ (for the flat laplacian) with value $\gamma$ on the boundary $\partial \D$. In the sequel, we decompose $\Cc^{2, \alpha}(\ov{\D})$ into $\Cc^{2, \alpha}(\S^1) \times \R \times K_0^{\bot}$, meaning that any $\eta \in \Cc^{2, \alpha}(\ov{\D})$ is in one-to-one correspondence with a triple $(\gamma, \lambda, \sigma) \in \Cc^{2, \alpha}(\S^1) \times \R \times K_0^{\bot}$ such that:
\[
\eta= \mu(\gamma)+ \lambda \phi^0+ \sigma.
\]

Consider $\Pi_K$ and $\Pi_{K^{\bot}}$, the orthogonal projections on $K$ and $K^{\bot}$ respectively. We follow White~\cite{Wh} and define a suitable map to apply the Implicit Function Theorem:

\begin{lemma}
Let $\Phi: \Cc^{2, \alpha}(\S^1) \times \R \times K_0^{\bot} \into K^{\bot}$ be the map defined by:
\[
\Phi(\gamma, \lambda, \sigma)= \Pi_{K^{\bot}} \circ \ov{H} \big{(} \mu_a(\gamma)+ \lambda \phi^a+ \sigma \big{)}.
\]
Then $D_3 \Phi(\gamma^a, 0, 0): K_0^{\bot} \into K^{\bot}$ is an isomorphism.
\end{lemma}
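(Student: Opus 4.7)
My approach is to identify $D_3 \Phi(\gamma^a, 0, 0)$ with (a perturbation of) the restriction $\ov{L}_0|_{K_0^{\bot}}: K_0^{\bot} \into K^{\bot}$, and then read off the isomorphism property from the Fredholm theory recalled at the end of the previous subsection. Since the argument $\mu_a(\gamma)+ \lambda \phi^a+ \sigma$ of $\ov{H}$ is affine in $\sigma$, the chain rule immediately gives
\[
D_3 \Phi(\gamma^a, 0, 0) \cdot \sigma= \Pi_{K^{\bot}} \circ D\ov{H} \big{(} \mu_a(\gamma^a) \big{)} \cdot \sigma.
\]
In the base case $\mu_a(\gamma^a)= 0$, this linearization is the compactified Jacobi operator $\ov{L}$ itself; and for $\sigma \in K_0^{\bot} \subset \Cc^{2, \alpha}_0(\ov{\D})$, the recalled identity $\ov{L}_0 \big{(} \Cc^{2, \alpha}_0(\ov{\D}) \big{)}= K^{\bot}$ guarantees that $\ov{L}\sigma$ already lies in $K^{\bot}$, so that the projection acts trivially and $D_3 \Phi(\gamma^a, 0, 0) \cdot \sigma= \ov{L}_0 \sigma$.

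It then remains to check that $\ov{L}_0: K_0^{\bot} \into K^{\bot}$ is bijective. For injectivity, any $\sigma \in K_0^{\bot}$ with $\ov{L}_0 \sigma= 0$ lies in $\ker \ov{L}_0= K= \R\phi^0$, hence $\sigma \in K \cap K^{\bot}= \lac 0 \rac$. For surjectivity, given $f \in K^{\bot}$, the same recalled identity furnishes $\eta \in \Cc^{2, \alpha}_0(\ov{\D})$ with $\ov{L}_0 \eta= f$; decomposing $\eta= \lambda \phi^0+ \sigma$ along $K \oplus K_0^{\bot}$ and using $\ov{L}_0 \phi^0= 0$, one obtains $\ov{L}_0 \sigma= f$ with $\sigma \in K_0^{\bot}$. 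Continuity of the inverse is then automatic by the open mapping theorem between the Banach spaces $K_0^{\bot}$ and $K^{\bot}$.

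The only point I expect to require some care is when $\mu_a(\gamma^a)$ is not the model surface, so that $D\ov{H} \big{(} \mu_a(\gamma^a) \big{)}$ differs from $\ov{L}$ by a nontrivial elliptic perturbation. There one must argue that the three Fredholm features invoked above ---~index zero, one-dimensional kernel spanned by an analogue $\phi^a$ of $\phi^0$, and image exactly equal to $K^{\bot}$~--- are preserved under the perturbation. I would extract this stability from the uniform coercivity of the matrix $A$ established in Theorem~\ref{thm:compcourb} together with standard continuity of the Fredholm index under bounded perturbations of elliptic operators with the same principal symbol, with the projection $\Pi_{K^{\bot}}$ absorbing the shift between $\ker \ov{L}_0$ and $\ker D\ov{H}(\mu_a(\gamma^a))$. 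Once this stability is secured, the linear-algebraic argument above concludes.
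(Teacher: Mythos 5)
Your main argument is exactly the paper's: compute $D_3\Phi(0,0,0)=\Pi_{K^{\bot}}\circ \ov{L}_0|_{K_0^{\bot}}$, observe that the projection acts trivially because $\ov{L}_0\big(\Cc^{2,\alpha}_0(\ov{\D})\big)=K^{\bot}$, and deduce bijectivity from $K\cap K_0^{\bot}=\lac 0\rac$ and the decomposition $\Cc^{2,\alpha}_0(\ov{\D})=K\oplus K_0^{\bot}$; you merely spell out the injectivity and surjectivity that the paper leaves implicit in ``$K^{\bot}$ is the range of $\ov{L}_0$.'' Your final paragraph, however, addresses a case that does not arise: the decorations $\mu_a$, $\phi^a$, $\gamma^a$ in the lemma statement are typographical artifacts. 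The surrounding text defines only $\mu$ and $\phi^0$, decomposes $\eta=\mu(\gamma)+\lambda\phi^0+\sigma$, and applies the Implicit Function Theorem on a neighborhood of $(0,0)$ in $\Cc^{2,\alpha}(\S^1)\times\R$; since $\mu(0)=0$, the linearization is taken exactly at the model surface $S^0$, so $D\ov{H}(\mu(0))=\ov{L}$ on the nose. Your ``base case'' is therefore the only case, and no stability-of-Fredholm-data argument is needed.
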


\begin{proof}
We compute that $D_3 \Phi(\gamma^a, 0, 0)= \Pi_{K^{\bot}} \circ \ov{L}_0|_{K_0^{\bot}}$ and we know $K^{\bot}$ is the range of $\ov{L}_0$, which means $D_3 \Phi(\gamma^a, 0, 0): K_0^{\bot} \into K^{\bot}$ is an isomorphism.
\end{proof}

Therefore, we can apply the Implicit Function Theorem to $\Phi$, which states that there exist an open neighborhood $U$ of $(0, 0)$ in $\Cc^{2, \alpha}(\S^1) \times \R$ and a unique smooth map $\sigma: U \into K_0^{\bot}$ such that:
\[
\a (\gamma, \lambda) \in U,\ \Phi\big{(} \gamma, \lambda, \sigma(\gamma, \lambda) \big{)}= 0.
\]
Consequently, define smooth maps $\eta: U \into \Cc^{2, \alpha}(\ov{\D})$ and $\kappa: U \into K$ by:
\begin{gather*}
\eta(\gamma, \lambda)= \mu(\gamma)+ \lambda \phi^0+ \sigma(\gamma, \lambda) \quad \text{and} \quad \kappa(\gamma, \lambda)= \Pi_K \circ \ov{H} \big{(} \eta(\gamma, \lambda) \big{)}.
\end{gather*}
If an entire graph admits $X^{\eta(\gamma, \lambda)}$ as graph coordinates at infinity, we say that $\lac \gamma, \lambda \rac$ are the \emph{data} of the surface and that $\gamma$ is the \emph{value at infinity}.

\begin{lemma} \label{lem:eta}
The map $\eta$ has the following properties:
\begin{enumerate}
	\item\label{pt:zero} $\eta(0, 0)= 0$.
	\item\label{pt:valbound} $\a (\gamma, \lambda) \in U,\ \eta(\gamma, \lambda)|_{\partial \D}= \gamma$.
	\item\label{pt:diff} $D_2\eta(0, 0): \lambda \in \R \mapsto \lambda \phi^0 \in \Cc^{2, \alpha}(\ov{\D})$.
\end{enumerate}
\end{lemma}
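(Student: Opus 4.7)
The plan is to unpack the definition $\eta(\gamma, \lambda)= \mu(\gamma)+ \lambda \phi^0+ \sigma(\gamma, \lambda)$ and to derive each of the three items from elementary identities together with the uniqueness clause of the Implicit Function Theorem applied to $\Phi$.

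For item~\ref{pt:zero}, I would first note that $\mu(0)= 0$, since the harmonic extension of the zero boundary datum is identically zero. The only nontrivial point is then $\sigma(0, 0)= 0$, and for this I would verify that the triple $(0, 0, 0)$ already solves the defining equation $\Phi= 0$: the model surface is minimal, so $H(0)= 0$, hence $\ov{H}(0)= 0$ and in particular $\Phi(0, 0, 0)= \Pi_{K^{\bot}} \circ \ov{H}(0)= 0$. The uniqueness of $\sigma$ provided by the Implicit Function Theorem forces $\sigma(0, 0)= 0$, whence $\eta(0, 0)= 0$.

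For item~\ref{pt:valbound}, I would evaluate each summand on $\partial \D$ separately: $\mu(\gamma)|_{\partial \D}= \gamma$ by the very definition of $\mu$; $\phi^0|_{\partial \D}= 0$ was computed explicitly in the proof of Lemma~\ref{lem:nosol}; and $\sigma(\gamma, \lambda)|_{\partial \D}= 0$ because $\sigma$ takes values in $K_0^{\bot}\subset \Cc^{2, \alpha}_0(\ov{\D})$. Summing the three gives $\eta(\gamma, \lambda)|_{\partial \D}= \gamma$.

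For item~\ref{pt:diff}, differentiating the decomposition of $\eta$ with respect to $\lambda$ at $(0, 0)$ yields
\[
D_2 \eta(0, 0) \cdot \lambda= \lambda \phi^0+ D_2 \sigma(0, 0) \cdot \lambda,
\]
so it suffices to prove $D_2 \sigma(0, 0)= 0$. The Implicit Function Theorem provides
\[
D_2 \sigma(0, 0)= -\lp D_3 \Phi(0, 0, 0) \rp^{-1} \circ D_2 \Phi(0, 0, 0),
\]
and since $D_3 \Phi(0, 0, 0)$ is an isomorphism by the preceding lemma, it remains only to check $D_2 \Phi(0, 0, 0)= 0$. A direct computation from the definition of $\Phi$, together with the identification $D\ov{H}(0)= \ov{L}$, gives
\[
D_2 \Phi(0, 0, 0) \cdot \lambda= \Pi_{K^{\bot}} \circ \ov{L}(\lambda \phi^0)= \lambda\, \Pi_{K^{\bot}}(0)= 0,
\]
since $\phi^0 \in K= \ker \ov{L}_0$.

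The argument is essentially bookkeeping; there is no real obstacle once the three identities $\ov{H}(0)= 0$, $\phi^0|_{\partial \D}= 0$ and $\ov{L} \phi^0= 0$ are in hand, each of which has already been recorded earlier in the section.
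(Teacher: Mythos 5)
Your proof is correct and follows essentially the same route as the paper: decompose $\eta$, use the uniqueness clause of the Implicit Function Theorem for Point~\ref{pt:zero}, evaluate the three summands on $\partial\D$ for Point~\ref{pt:valbound}, and reduce Point~\ref{pt:diff} to $D_2\sigma(0,0)=0$ via $\ov{L}\phi^0=0$. The only cosmetic difference is that you invoke the implicit-derivative formula $D_2\sigma=-(D_3\Phi)^{-1}\circ D_2\Phi$ and observe $D_2\Phi(0,0,0)=0$, whereas the paper differentiates $\Phi(0,t,\sigma(0,t))=0$ directly and uses that the range of $\ov{L}_0$ equals $K^{\bot}$ before concluding $D_2\sigma(0,0)\cdot 1\in K\cap K_0^{\bot}=\{0\}$; both are correct and equivalent.
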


\begin{proof}
Point~\ref{pt:zero} is a direct consequence of the uniqueness in the Implicit Function Theorem. For Point~\ref{pt:valbound} compute:
\[
\eta(\gamma, \lambda)|_{\partial \D}= \mu(\gamma)|_{\partial \D}+ \lambda \phi^0|_{\partial \D}+ \sigma(\gamma, \lambda)|_{\partial \D}= \gamma.
\]
And for Point~\ref{pt:diff}, we only have to show $D_2 \sigma(\gamma^a, 0)= 0$. To do so, compute:
\begin{align*}
0 & = \lr \frac{d}{dt} \rb_{t= 0} \Phi \big{(} 0, t, \sigma(0, t) \big{)}= \Pi_{K^{\bot}} \circ \ov{L} \big{(} \phi^0+ D_2 \sigma(0, 0) \cdot 1 \big{)} \\
  & = \Pi_{K^{\bot}} \circ \ov{L}_0 \big{(} \phi^0+ D_2 \sigma(0, 0) \cdot 1 \big{)}= \Pi_{K^{\bot}} \circ \ov{L}_0 \big{(} D_2 \sigma(0, 0) \cdot 1 \big{)} \\
  & = \ov{L}_0 \big{(} D_2 \sigma(0, 0) \cdot 1 \big{)}.
\end{align*}
Thus, $D_2 \sigma(0, 0) \cdot 1 \in K \cap K_0^{\bot}= \lac 0 \rac$ i.e. $D_2 \sigma(0, 0)= 0$.
\end{proof}

\begin{remark} \label{rmk:lambda}
Lemma~\ref{lem:eta} Point~\ref{pt:valbound} shows that the value at infinity of a surface $X^{\eta(\gamma, \lambda)}$ does not depend on $\lambda$, meaning that given a value at infinity $\gamma$ there exists a $1$-parameter family of surfaces all with value at infinity equals to $\gamma$. Moreover, it can be shown that any two surfaces in this family are congruent only up to a vertical translation. Indeed, from the half-space theorem of Daniel, Meeks and Rosenberg~\cite{DaMeRo}, we know that the difference of heights of two entire minimal graphs diverges unless the graphs differ from each other by a vertical translation. In our case, if $(\gamma, \lambda), (\gamma, \lambda') \in U$ then:
\[
\eta(\gamma, \lambda)- \eta(\gamma, \lambda')= O(1- r^2),
\]
and the difference of heights is bounded.
\end{remark}

The values of the mean curvature of deformations $X^{\eta(\gamma, \lambda)}$ of $S^0$ are determined by $\kappa$. Indeed, for $(\gamma, \lambda) \in U$, we have $\Phi\big{(} \gamma, \lambda, \sigma(\gamma, \lambda) \big{)}= 0$ and:
\begin{equation} \label{eq:hkappa}
\ov{H} \big{(} \eta(\gamma, \lambda) \big{)}= \kappa(\gamma, \lambda)+ \Phi\big{(} \gamma, \lambda, \sigma(\gamma, \lambda) \big{)}= \kappa(\gamma, \lambda).
\end{equation}
Consider $\Uu= \kappa^{-1}(\lac 0 \rac) \cap U$. The fact that the parameter $\lambda$ is associated to vertical translations means that we can write $\Uu= \Gamma \times \R$, with $\Gamma$ a subset of $\Cc^{2, \alpha}(\S^1)$. Furthermore, since the construction is local, we can suppose $\Gamma$ connected.

\begin{proposition} \label{prop:local}
$\Gamma$ is a codimension $1$ smooth submanifold of $\Cc^{2, \alpha}(\S^1)$ is a subset of:
\[
\lac \gamma \in \Cc^{2, \alpha}(\S^1) \lb \int_0^{2\pi} \gamma\,d\theta= 0 \rr \rac.
\]
\end{proposition}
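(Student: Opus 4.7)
The plan is to establish the two claims separately: $\Gamma$ is a smooth codimension-one submanifold of $\Cc^{2,\alpha}(\S^1)$ near $0$, and $\Gamma$ is contained in the zero-mean hyperplane. The first is local and will come from the Implicit Function Theorem; the second is a global flux identity.

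For the submanifold structure, I would first observe that vertical translation by $cE_3$ is an isometry of $\Nil_3$ sending $X^\eta$ to $X^{\eta+c\phi^0}$, so it acts on the data by $\lambda \mapsto \lambda+c$ while fixing $\gamma$. Because mean curvature is isometry-invariant, $\kappa(\gamma,\lambda)$ depends only on $\gamma$, and $\Gamma$ coincides locally with the zero set of $\wh\kappa : \gamma \mapsto \kappa(\gamma,0)$. By Lemma~\ref{lem:eta} and the chain rule,
\[
D\wh\kappa(0) \cdot \dot\gamma = \Pi_K \circ \ov{L}\lp \mu(\dot\gamma) + D_1\sigma(0,0) \cdot \dot\gamma \rp.
\]
I would evaluate this via the Green identity~\eqref{eq:green} with $u = \phi^0$ and $v = \mu(\dot\gamma) + D_1\sigma(0,0)\cdot\dot\gamma$: using $\ov{L}\phi^0 = 0$, $\phi^0|_{\partial\D}=0$, $\partial_r\phi^0|_{\partial\D}=-1$, and $v|_{\partial\D}=\dot\gamma$ yields $\lan \phi^0, \ov{L}v\ran_{L^2(\D)} = \int_0^{2\pi}\dot\gamma\,d\theta$, hence
\[
D\wh\kappa(0) \cdot \dot\gamma = \frac{1}{|\phi^0|_{L^2(\D)}^{2}} \lp \int_0^{2\pi} \dot\gamma\,d\theta \rp \phi^0.
\]
Taking $\dot\gamma \equiv 1$ shows surjectivity onto $K = \R\phi^0$; the kernel is a closed codimension-one subspace complemented by constants, so the IFT delivers the submanifold.

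For the containment, I would invoke the divergence form of the minimal surface equation for a vertical graph $x_3 = u(x_1, x_2)$ in $\Nil_3$, namely the Euler--Lagrange equation
\[
\partial_{x_1}\lp \frac{p}{W} \rp + \partial_{x_2}\lp \frac{q}{W} \rp = 0, \quad p = u_1 + \frac{x_2}{2},\ q = u_2 - \frac{x_1}{2},\ W = \sqrt{1 + p^2 + q^2},
\]
of the area functional $\int W\,dx_1 dx_2$. A direct computation in cylindrical coordinates shows $p\cos\theta + q\sin\theta = u_\rho$, so the Euclidean divergence theorem applied to the disk $\lac \rho \leq R \rac$ for the entire minimal graph associated to any $\gamma \in \Gamma$ gives
\[
0 = \int_0^{2\pi} \frac{u_\rho}{W}\bigg|_{\rho = R} R\,d\theta, \quad \a R > 0.
\]
Inverting $\rho = 4r/(1-r^2)$ near $r = 1$ in the relation $u(\rho\cos\theta,\rho\sin\theta) = \eta(r,\theta)(1+r^2)/(1-r^2)$, a boundary Taylor expansion of $\eta \in \Cc^{2,\alpha}(\ov\D)$ yields $u_\rho \to \gamma(\theta)/2$ as $\rho \into +\ii$, while $W \sim \rho/2$ from $p \sim \rho\sin\theta/2$, $q \sim -\rho\cos\theta/2$. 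Consequently $R\, u_\rho/W \to \gamma(\theta)$ uniformly in $\theta$, and passing to the limit in the flux identity produces $\int_0^{2\pi} \gamma\,d\theta = 0$.

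The main obstacle I anticipate is the uniform control of the convergence in the second step: justifying that $R u_\rho/W$ converges uniformly in $\theta$ (and not merely pointwise) to $\gamma(\theta)$, so that the limit of the flux is exactly the integral of $\gamma$. This rests on the $\Cc^{2,\alpha}$-regularity of $\eta$ up to $\partial\D$ and on controlling the sub-leading terms in the Taylor expansion of $\eta$ in the boundary variable $1 - r^2$. The first step is comparatively routine once the explicit form of $\ov{L}$, the Green identity, and the structural description of $\eta(\gamma,\lambda)$ from Lemma~\ref{lem:eta} are combined.
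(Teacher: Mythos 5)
Your proof is correct and follows the same overall architecture as the paper's (a submersion argument for the submanifold structure, followed by a vertical-flux identity for the zero-mean containment), but each step differs in execution in a way worth recording. For the submanifold structure, the paper does not reduce to a map of $\gamma$ alone: it verifies directly that $D_2\kappa(0,0)\cdot 1 = 0$ and that $D_1\kappa(0,0)\cdot 1 = \ov{L}\lp D_1\eta(0,0)\cdot 1\rp$ is nonzero by invoking Lemma~\ref{lem:nosol}, which forbids a solution of $\ov{L}u= 0$ with $u|_{\partial\D}= 1$. Your Green-identity computation is a genuine refinement: applying~\eqref{eq:green} with $u= \phi^0$ and $v= D_1\eta(0,0)\cdot\dot\gamma= \mu(\dot\gamma)+ D_1\sigma(0,0)\cdot\dot\gamma$ (whose boundary value is $\dot\gamma$ by Lemma~\ref{lem:eta} Point~\ref{pt:valbound}) yields the explicit linearization
\[
D\wh\kappa(0)\cdot\dot\gamma= \frac{1}{|\phi^0|^2_{L^2(\D)}}\lp \int_0^{2\pi}\dot\gamma\,d\theta\rp\phi^0,
\]
which gives surjectivity by taking $\dot\gamma\equiv 1$ (this is precisely the computation behind Lemma~\ref{lem:nosol}) and moreover identifies $T_0\Gamma$ with the zero-mean hyperplane outright, something the paper's argument does not extract. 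For the containment, the paper quotes from~\cite{Ca2} a boundary expression for the vertical flux $f_3$ of $X^\eta$ in graph coordinates at infinity and passes to the limit $R\into 1$; you instead derive the flux identity from scratch via the divergence form of the minimal-graph equation in $\Nil_3$ and the Euclidean divergence theorem in the base. These are the same conserved quantity written in two coordinate systems. Your anticipated difficulty about uniformity is in fact already resolved by the $\Cc^{2,\alpha}(\ov{\D})$ regularity built into graph coordinates at infinity: one has $u_\rho= \frac{1}{4}\eta_1(1- r^2)+ \frac{r\eta}{1+ r^2}\into \gamma/2$ and $W= \frac{\rho}{2}+ O(1)$ uniformly in $\theta$, so $R\,u_\rho/W\into \gamma$ uniformly and the flux limit is exactly $\int_0^{2\pi}\gamma\,d\theta$. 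In short the conclusions and logical skeleton agree; the differences are a more explicit linearization in the first step and a self-contained flux derivation in the second.
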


\begin{proof}
We show that $\kappa$ is a submersion at $(0, 0)$. From Equation~\eqref{eq:hkappa} compute:
\begin{align*}
D_2 \kappa(0, 0) \cdot 1 & = \lr \frac{d}{dt} \rb_{t= 0} \kappa(0, t)= \lr \frac{d}{dt} \rb_{t= 0} \ov{H} \big{(} \eta(0, t) \big{)} \\
                         & = \ov{L} \big{(} D_2 \eta(0, 0) \cdot 1 \big{)}= \ov{L}_0(\phi^0)= 0,
\end{align*}
using Lemma~\ref{lem:eta} Point~\ref{pt:diff} and that $\phi^0 \in K$. Moreover, $D_1\kappa(0, 0) \cdot 1$ is not identically zero. Indeed, using \eqref{eq:hkappa}:
\begin{align*}
D_1 \kappa(0, 0) \cdot 1 & = \lr \frac{d}{dt} \rb_{t= 0} \kappa(t, 0)= \lr \frac{d}{dt} \rb_{t= 0} \ov{H} \big{(} \eta(t, 0) \big{)} \\
                         & = \ov{L} \big{(} D_1 \eta(0, 0) \cdot 1 \big{)} \neq 0,
\end{align*}
using Corollary~\ref{lem:nosol} with $\big{(} D_1 \eta(0, 0) \cdot 1 \big{)}|_{\partial \D}= 1$ deduced from Lemma~\ref{lem:eta} Point~\ref{pt:valbound}. Since $D\kappa$ is continuous and non zero at $(0, 0)$, there exists an open neighborhood of $(0, 0)$ in $\Cc^{2, \alpha}(\S^1) \times \R$ on which $\kappa$ is a submersion. Therefore, up to a restriction on $\Gamma$, we can suppose $\kappa$ is a submersion on $\Gamma \times \lac 0 \rac$, which implies $\Gamma$ is a submanifold of $\Cc^{2, \alpha}(\S^1)$ of codimension $1$.

The inclusion for $\Gamma$ is actually equivalent to the nullity of the vertical flux of an entire graph in $\Nil_3$. Consider a minimal surface admitting graph coordinates at infinity $X^{\eta}$ with $\eta \in \Cc^{2, \alpha}(\ov{\D})$. As shown in~\cite{Ca2}, the vertical flux $f_3$ of $X^{\eta}$ is for any $R \in (0, 1)$:
\begin{align*}
f_3 & = \int_0^{2\pi} \lr \Bigg{\lan} \frac{g_{22}(\eta)}{\sqrt{|g(\eta)|}} X^{\eta}_1- \frac{g_{12}(\eta)}{\sqrt{|g(\eta)|}} X^{\eta}_2, E_3 \Bigg{\ran} \rb_{r= R} d\theta \\
    & = \frac{4R^2}{(1+ R^2)^2} \int_0^{2\pi} \eta(R, \theta) d\theta+ O(1- R^2) \\
    & = \int_0^{2\pi} \eta|_{r= 1} d\theta,
\end{align*}
when taking the limit when $R \into 1$. Hence, if $X^{\eta}$ is an entire graph such that $\eta= \eta(\gamma, \lambda)$ for some data $\lac \gamma, \lambda \rac$ with $\gamma \in \Gamma$, the flux $f_3$ is zero and so is the mean of $\gamma$.
\end{proof}

Treibergs~\cite{Tr} showed that given a $\Cc^2$ curve $\gamma: \S^1 \into \R$, there exists a constant mean curvature complete entire vertical graph in $3$-dimensional Minkowski space which is asymptotically at signed distance $\gamma$ from the light cone. Proposition~\ref{prop:local} is actually a $\Cc^{2, \alpha}$ local version of this result in $\Nil_3$:

\begin{theorem} \label{thm:treibergs}
Consider $\gamma \in \Cc^{2, \alpha}(\S^1)$ small enough for the $\Cc^{2, \alpha}$-norm and with zero mean. Then there exists a minimal complete entire vertical graph at asymptotic horizontal signed distance $\gamma$ from $S^0$. Moreover, such a surface is unique up to vertical translations.
\end{theorem}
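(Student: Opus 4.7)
The plan is to promote the inclusion $\Gamma \subset V_0$, where $V_0 = \{\gamma \in \Cc^{2,\alpha}(\S^1) : \int_0^{2\pi} \gamma\,d\theta = 0\}$, from Proposition~\ref{prop:local} into an equality $\Gamma = V_0 \cap W$ on a suitable open neighborhood $W$ of the origin. Once this is in hand, existence is immediate from the construction preceding Proposition~\ref{prop:local}, while uniqueness up to vertical translation is exactly the content of Remark~\ref{rmk:lambda}.

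To establish the equality, I observe that both $V_0$ and $\Gamma$ have codimension one in $\Cc^{2,\alpha}(\S^1)$: the space $V_0$ is a closed hyperplane complemented by the line of constants, and $\Gamma$ is a codimension one smooth submanifold through the origin by Proposition~\ref{prop:local}. The inclusion $\Gamma \subset V_0$ gives $T_0 \Gamma \subset V_0$, and matching codimensions force $T_0 \Gamma = V_0$. Using the splitting $\Cc^{2,\alpha}(\S^1) = V_0 \oplus \R$, the implicit function theorem represents $\Gamma$ near $0$ as a graph $\{v + f(v) : v \in V_0 \cap W\}$ for some smooth map $f : V_0 \cap W \into \R$ with $f(0) = 0$ and $df(0) = 0$. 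Because the constant function $1$ has nonzero mean while $\Gamma \subset V_0$, we must have $f \equiv 0$, so $\Gamma$ coincides with $V_0 \cap W$.

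Given now any $\gamma \in V_0$ with sufficiently small $\Cc^{2,\alpha}$-norm, the pair $(\gamma, 0)$ lies in $\Gamma \times \R = \Uu$, and by \eqref{eq:hkappa} we get $\ov{H}\big(\eta(\gamma, 0)\big) = \kappa(\gamma, 0) = 0$. Thus $X^{\eta(\gamma, 0)}$ is an entire minimal vertical graph, and Lemma~\ref{lem:eta} Point~\ref{pt:valbound} identifies its value at infinity---hence its asymptotic horizontal signed distance to $S^0$---with $\gamma$. For uniqueness, two entire minimal graphs with identical asymptotic value $\gamma$ have graph-coordinate functions that agree on $\partial\D$, so their $\eta$-functions differ by $O(1 - r^2)$ and their vertical heights by a bounded quantity; the half-space theorem of Daniel, Meeks and Rosenberg~\cite{DaMeRo}, invoked exactly as in Remark~\ref{rmk:lambda}, then forces the two graphs to differ by a vertical translation.

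The main obstacle I anticipate is the tangent-space identification $T_0 \Gamma = V_0$ in the second paragraph; it relies crucially on already knowing both the inclusion $\Gamma \subset V_0$ and that $\Gamma$ has codimension one, both established in Proposition~\ref{prop:local}. This codimension-count argument sidesteps a direct computation of the functional $D_1 \kappa(0,0)$ on $\Cc^{2,\alpha}(\S^1)$, which would otherwise proceed by expanding $\gamma$ in Fourier modes on $\S^1$ and exploiting the explicit formula $\ov{L} = \Delta + 8/(1+r^2)^2$ to pin down its kernel as the zero-mean subspace.
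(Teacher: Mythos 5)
Your proof is correct and takes essentially the same approach as the paper's, whose own argument is extremely terse: it simply asserts that every sufficiently small zero-mean $\gamma$ lies in $\Gamma$ and then invokes Remark~\ref{rmk:lambda} for uniqueness. Your codimension-count argument, identifying $T_0\Gamma$ with the zero-mean hyperplane and then representing $\Gamma$ locally as a graph over that hyperplane with vanishing constant component, supplies precisely the justification the paper leaves implicit.
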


\begin{proof}
For any $\gamma$ sufficiently small in the $\Cc^{2, \alpha}$ norm, we have $\gamma \in \Gamma$ and $X^{\eta(\gamma, 0)}$ is a minimal entire graph admitting $\gamma$ as value at infinity. And as in Remark~\ref{rmk:lambda}, uniqueness comes from Daniel, Meeks and Rosenberg half-space theorem~\cite{DaMeRo}.
\end{proof}

\subsection{Periodic deformations} \label{subsec:perioddef}

In the sequel, we fix a natural $n \geq 2$ and a parameter $a> 0$. To ease the writing, we denote $\theta_n= \pi/ n$ and $\gamma_{k, u}$ the horizontal geodesic directed by $\cos(k\theta_n) E_1+ \sin(k\theta_n) E_2$ and passing through $(0, 0, u)$ for any $k \in \lac 0, \dots, n- 1 \rac$ and $u \in \R$. We are interested in rotationally symmetric solutions to act as barriers in Section~\ref{sec:saddle}. They are constructed by the following result, which a corollary of Theorem~\ref{thm:treibergs}:

\begin{proposition} \label{prop:symgraph}
There exist a minimal entire graph $S_n$ such that:
\begin{enumerate}
	\item\label{pt:reflection} For any $k \in \lac 0, \dots, n- 1 \rac$, $S_n$ contains the horizontal geodesic $\gamma_{k, 0}$ and is invariant for the geodesic reflection along that geodesic.
	\item\label{pt:positive} The height of $S_n$ is nonnegative on the (open) angular sector $\lac 0< \theta< \theta_n \rac$.
\end{enumerate}
\end{proposition}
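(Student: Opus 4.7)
The plan is to apply Theorem~\ref{thm:treibergs} with an asymptotic datum that already carries the dihedral symmetry required of $S_n$, and then read the geometric conclusions off the uniqueness clause.

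For the datum I would take $\gamma(\theta)= \epsilon\sin(n\theta)$ with $\epsilon> 0$ small, which lies in $\Cc^{2, \alpha}(\S^1)$ with arbitrarily small norm, has vanishing mean, and satisfies the identity $-\gamma(2k\theta_n- \theta)= \gamma(\theta)$ for every $k \in \lac 0, \dots, n- 1 \rac$. Since the geodesic reflection $\phi_k$ along $\gamma_{k, 0}$ acts in cylindrical coordinates as $(\rho, \theta, x_3) \mapsto (\rho, 2k\theta_n- \theta, -x_3)$, this identity is precisely the statement that the asymptotic horizontal signed distance of an entire graph is preserved by $\phi_k$. Theorem~\ref{thm:treibergs} then produces a minimal entire vertical graph at asymptotic horizontal signed distance $\gamma$ from $S^0$, unique up to a vertical translation; I would let $S_n$ be the translate that meets the vertical axis at the origin.

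Point~\ref{pt:reflection} would follow from uniqueness: for each $k$ the surface $\phi_k(S_n)$ is again a minimal entire graph with asymptotic data $\gamma$, hence a vertical translate of $S_n$; because $\phi_k$ fixes the origin, which lies on $S_n$ by construction, the translation must be trivial and $\phi_k(S_n)= S_n$. The containment $\gamma_{k, 0} \subset S_n$ is then automatic from the graph property: over every $\pi(p)$ with $p \in \gamma_{k, 0}$, the unique point of $S_n$ projecting there must be $\phi_k$-fixed, hence belong to the fixed set $\gamma_{k, 0}$.

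For Point~\ref{pt:positive}, write $S_n$ as the graph of a function $u: \R^2 \into \R$. The previous point gives $u \equiv 0$ on the two rays bounding the open sector $\Omega= \lac 0< \theta< \theta_n \rac$, while Definition~\ref{def:asympdist} combined with $\eta(\gamma, 0) \in \Cc^{2, \alpha}(\ov{\D})$ yields the uniform expansion $u(\rho, \theta)= \tfrac{\rho}{2}\gamma(\theta)+ o(\rho)$, so $u \into +\ii$ in the interior of $\Omega$ as $\rho \into +\ii$. If $u$ were negative somewhere on $\Omega$, a compactness argument would place its infimum $m< 0$ at an interior point, at which the translated surface $S_n- (0, 0, m)$ would be a minimal surface tangent from above to $S^0= \lac x_3= 0 \rac$; the tangency principle for minimal surfaces in $\Nil_3$ would then force $S_n- (0, 0, m)= S^0$ by unique continuation, contradicting $u \equiv 0$ on the boundary rays. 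The delicate step is this positivity argument: one must check that the asymptotic expansion is uniform enough in $\theta$ on $\ov{\Omega}$ for the infimum to be attained at an interior point (rather than escaping to the boundary or to infinity), and that the strong maximum principle for minimal graphs in $\Nil_3$ applies cleanly in the tangent configuration above.
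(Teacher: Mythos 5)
Your proposal is correct and follows essentially the same route as the paper. For Point~\ref{pt:reflection} you pick exactly the same datum $s_n(\theta)= \epsilon\sin(n\theta)$, exploit the same algebraic identity $-s_n(2k\theta_n- \theta)= s_n(\theta)$, and invoke uniqueness up to vertical translation from Theorem~\ref{thm:treibergs} together with the fixed origin to kill the translation. Your added remark that the containment $\gamma_{k,0}\subset S_n$ follows from the graph property plus the fact that the fixed set of $\phi_k$ is $\gamma_{k,0}$ is a useful piece of bookkeeping that the paper leaves implicit. For Point~\ref{pt:positive} the paper phrases the argument as a sliding: the height is bounded below (which, as you can check, follows from $\eta\in\Cc^{2,\alpha}(\ov{\D})$, $\eta|_{\partial\D}\geq 0$ on the sector, and the mean value theorem applied to $\eta$ in the radial variable, so that $\eta\,\frac{1+r^2}{1-r^2}\geq -\sup|\eta_r|$), one translates $S_n$ up until it is disjoint from $S^0$, then slides back down and invokes the maximum principle at first contact. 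Your version phrases it as tangency from above at an interior infimum point, which is the same maximum-principle argument in different clothes.

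The one place where you are rightly cautious --- ruling out escape of the infimum to infinity along the boundary rays --- is a genuine technical point that the paper's sliding argument also silently relies upon, since a first contact at a finite point requires the infimum to be attained. It does hold here: $\eta$ vanishes identically on the rays $\theta\in\{0,\theta_n\}$ inside $\ov{\D}$ (from Point~\ref{pt:reflection}), $\eta_\theta(1,0)= s_n'(0)= n\epsilon> 0$, and $\eta\in\Cc^{2,\alpha}(\ov{\D})$, so a second-order Taylor expansion in $\theta$ shows $\eta> 0$, hence $u> 0$, in a neighborhood of the corners $(r,\theta)= (1,0)$ and $(1,\theta_n)$ for $\theta$ strictly inside the sector; combined with the uniform divergence $u\to +\ii$ on compact subsets of $(0,\theta_n)$, negative values of $u$ are confined to a compact region and the infimum is attained. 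With that filled in, your proof matches the paper's.
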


\begin{proof}
For Point~\ref{pt:reflection}, fix $\epsilon> 0$ small enough such that $s_n: \theta \mapsto \epsilon\sin(n\theta)$ is in $\Gamma$. From Theorem~\ref{thm:treibergs}, we know there exists a minimal entire graph $S_n$ with $X^{\eta(s_n, \lambda)}$ as graph coordinates at infinity, where the translation parameter $\lambda$ is chosen so that $S_n$ contain the origin. Consider the map $\eta' \in \Cc^{2, \alpha}(\ov{D})$ defined for some $k \in \lac 0, \dots, n- 1 \rac$ by:
\[
\eta'(r, \theta)= -\eta(s_n, \lambda)(r, 2k\theta_n- \theta).
\]
The surface $X^{\eta'}$ is the image of $X^{\eta(s_n, \lambda)}$ under the geodesic reflection along $\gamma_{k, 0}$. The value at infinity of $\eta'$ is:
\[
\eta'|_{r= 1}(\theta)= -\eta(\gamma^n, \lambda)|_{r= 1}(2k\theta_n- \theta)= -\sin(2\pi- n\theta)= \sin(n\theta)= s_n(\theta),
\]
Using Remark~\ref{rmk:lambda}, we know that $X^{\eta'}$ and $X^{\eta(s_n, \lambda)}$ differ by a vertical translation and since $X^{\eta(s_n, \lambda)}$ contains the origin ---~which is fixed by the geodesic reflection along $\gamma_{k, 0}$~---, we get $X^{\eta'}= X^{\eta(s_n, \lambda)}$.

For Point~\ref{pt:positive}, consider the restriction of $S_n$ to the angular sector $\lac 0 \leq \theta \leq \theta_n \rac$, which we also denote $S_n$ in the sequel. For any $\theta \in [0, \theta_n]$, we have $s_n(\theta) \geq 0$ and $s_n$ is positive in the interior. Hence, the height of $S_n$ is bounded from below and by vertical translation $T_h$ with positive $h$, we can make $T_h(S_n)$ and $S^0$ disjoint on $\lac 0 \leq \theta \leq \theta_n \rac$. Now, suppose there exist a point inside the open angular sector at which the height of $S_n$ is negative. Translating back, we obtain a first contact point between $T_{h_0}(S_n)$ and $S^0$ for some $h_0> 0$, which is impossible by maximum principle.
\end{proof}

Given an open subset $G$ of $\R^p$, $p \geq 2$, a minimal hypersurface in Euclidean space $\R^{p+ 1}$ is said to be \emph{supported} on $G$ if it is the graph of a function that does not change sign over $G$ and is zero on the boundary $\partial G$. A question of Meeks and Rosenberg~\cite{MeRo} is to know if the number of disjoint domains supporting minimal graphs is bounded. Li and Wang~\cite{LiWa} proved it to be true in Euclidean space for any dimension and Tkachev~\cite{Tk} refined the bounds.

In $\Nil_3$, we consider open subsets of $\R^2$ ---~where $\R^2$ is seen as the range of the projection $\pi$~--- and vertical minimal graphs. The construction of surfaces $S_n$ shows:

\begin{theorem} \label{thm:disjgraphs}
In Heisenberg space, the number of disjoint domains supporting minimal vertical graphs is unbounded.
\end{theorem}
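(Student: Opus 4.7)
The plan is to derive the theorem directly from Proposition~\ref{prop:symgraph}, by decomposing each surface $S_n$ into $n$ pieces that are minimal vertical graphs over $n$ pairwise disjoint planar domains. Letting $n$ grow then yields the unboundedness.

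First I would write $S_n$ as the graph of a function $z_n \colon \R^2 \into \R$. The horizontal geodesics $\gamma_{k, 0}$ contained in $S_n$ pass through the origin at height $0$, so they project under $\pi$ to $n$ Euclidean lines through the origin at angles $k\theta_n$, and $z_n$ vanishes on these lines. Together they cut $\R^2$ into $2n$ open angular sectors $G_0, \dots, G_{2n- 1}$ of aperture $\theta_n$. I would then show that $z_n$ has constant sign on each $G_k$, alternating from sector to sector. Indeed, Point~\ref{pt:positive} of Proposition~\ref{prop:symgraph} gives $z_n \geq 0$ on $G_0$; and the geodesic reflection along $\gamma_{0, 0}$ is the Euclidean $\pi$-rotation $(x_1, x_2, x_3) \mapsto (x_1, -x_2, -x_3)$, so the invariance of $S_n$ yields $z_n(x_1, -x_2)= -z_n(x_1, x_2)$, hence $z_n \leq 0$ on $G_{2n- 1}$. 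Iterating around the $n$ reflection axes produces the claimed alternation: $z_n \geq 0$ on the even-indexed sectors $G_{2k}$ and $z_n \leq 0$ on the odd-indexed ones.

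Next I would upgrade non-negativity to strict positivity in the interior. The prescribed asymptotic data of $S_n$ is $s_n(\theta)= \epsilon \sin(n\theta)$, which is strictly positive on $G_0$ at infinity, so $z_n$ is not identically zero on $G_0$. The vertical minimal graph equation in $\Nil_3$ being quasilinear elliptic, the strong maximum principle (comparing $z_n$ with the zero solution corresponding to $S^0$, which it matches on $\partial G_0$) forces $z_n> 0$ strictly on $G_0$, hence on every $G_{2k}$ by the reflection symmetry. Therefore the $n$ pairwise disjoint open domains $G_0, G_2, \dots, G_{2n- 2}$ each support a minimal vertical graph of $\Nil_3$, positive in the interior and zero on the boundary. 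Since $n \geq 2$ is arbitrary, the number of such disjoint domains is unbounded.

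The actual difficulty lies upstream, in Proposition~\ref{prop:symgraph}, whose existence relies on the barrier/compactification machinery developed in Section~\ref{sec:barrier}. Once that proposition is granted, the remaining ingredients here are just the combinatorial unpacking of the reflection symmetries and a routine maximum-principle argument, neither of which presents any serious obstacle.
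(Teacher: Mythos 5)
Your argument is correct and matches the paper's intended reasoning. The paper deliberately leaves this theorem without a written-out proof beyond the line ``The construction of surfaces $S_n$ shows:'', and what you supply is exactly the expected unpacking: the $n$ lines $\pi(\gamma_{k,0})$ cut $\R^2$ into $2n$ sectors, the graph function of $S_n$ vanishes on their boundaries and alternates in sign from sector to sector by the reflection invariance of Proposition~\ref{prop:symgraph}, and $n$ is arbitrary. Two small remarks: the paper's notion of ``supported'' only requires the graph not to change sign on $G$ and to vanish on $\partial G$, so your strong-maximum-principle upgrade to strict interior positivity is not strictly needed for the definition, though it does confirm non-triviality (which is anyway clear since the asymptotic data $s_n$ forces the height to grow linearly in $\rho$ on each open sector); and you could just as well count all $2n$ sectors rather than only the even-indexed ones, since both signs are allowed.
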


\section{Saddle towers in Heisenberg space} \label{sec:saddle}

In this section, we use notations introducted in Section~\ref{subsec:perioddef}. We build a Saddle tower with $2n$ ends distributed at constant angle $\theta_n$.

For any $b> 0$, consider the polygonal Jordan curve $\Gamma_b$ which is the reunion of the following geodesic segments:
\begin{gather*}
h_1(b)= \lac (t, 0, 0) \st{} 0 \leq t \leq b \rac, \quad h_2(b)= \lac (t\cos\theta_n, t\sin\theta_n, 0) \st{} 0 \leq t \leq b \rac, \\
\wt{h}_1(b)= \lac (t, 0, a) \st{} 0 \leq t \leq b \rac, \quad \wt{h}_2(b)= \lac (t\cos\theta_n, t\sin\theta_n, a) \st{} 0 \leq t \leq b \rac, \\
v_1(b)= \lac (b, 0, t) \st{} 0 \leq t \leq a \rac \quad \text{and} \quad v_2(b)= \lac (b\cos\theta_n, b\sin\theta_n, t) \st{} 0 \leq t \leq a \rac.
\end{gather*}
Note that $\wt{h}_1(b), \wt{h}_2(b)$ are horizontal lifts of $h_1(b), h_2(b)$ respectively.

Since the (euclidean) convex hull $H_b$ of $\Gamma_b$ is mean-convex, we know from Meeks and Yau~\cite{MeYa} that the Plateau problem with boundary $\Gamma_b$ is solvable, meaning there exists an embedded minimal disk $\Sigma_b \subset H_b$ bordered by $\Gamma_b$ (see Figure~\ref{fig:plateausol}).

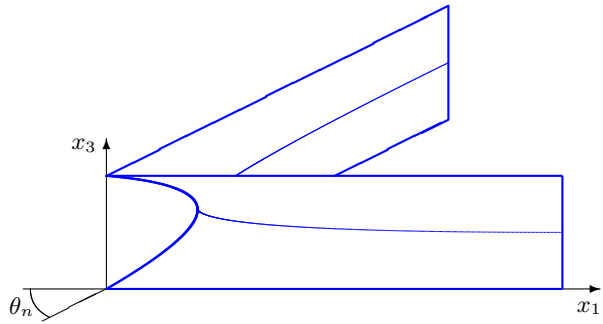
\begin{figure}[htbp] \centering
\begin{picture}(7.8, 4.2)(-1.3, -0.45)
\put(-1.1, 0){\vector(1, 0){7.6}} \put(0, 0){\vector(0, 1){2}}
\put(6.19, -0.3){\fns $x_1$} \put(-0.46, 1.85){\fns $x_3$}
\put(-0.84, -0.42){\line(2, 1){0.84}} \qbezier(-1, 0)(-1, -0.25)(-0.75, -0.375)
\put(-1.28, -0.33){\fns $\theta_n$}
\color{blue}
\qbezier(6, 0.75)(1.4, 0.75)(1.2, 1.05) \qbezier(4.5, 3)(2.5, 2.01)(1.7, 1.5)
\thicklines
\put(3, 1.5){\line(2, 1){1.5}} \put(0, 1.5){\line(2, 1){4.5}}
\put(0, 0){\line(1, 0){6}} \put (0, 1.5){\line(1, 0){6}}
\put(6, 0){\line(0, 1){1.5}} \put(4.5, 2.25){\line(0, 1){1.5}}
\qbezier(0, 0)(1.2, 0.7)(1.2, 1.05) \qbezier(0, 1.5)(1.2, 1.4)(1.2, 1.05)
\end{picture}
\caption{Plateau solution $\Sigma_b$ bordered by $\Gamma_b$ for some $b> 0$.} \label{fig:plateausol}
\end{figure}

To ensure the convergence of a subsequence of the family $(\Sigma_b)$ when $b \into +\ii$, we only need barriers from below and above and such barriers are the surfaces $S_n$ and $a- S_n$ respectively as constructed in Proposition~\ref{prop:symgraph}. Hence, there exists an embedded minimal surface $\Sigma_{\ii}$ bordered by $\Gamma_{\ii}$ which is the reunion of the horizontal geodesic rays:
\begin{gather*}
h_1= \lac (t, 0, 0) \st{} t \geq 0 \rac, \quad h_2= \lac (t\cos\theta_n, t\sin\theta_n, 0) \st{} t \geq 0 \rac, \\
\wt{h}_1= \lac (t, 0, a) \st{} t \geq 0 \rac \quad \text{and} \quad \wt{h}_2= \lac (t\cos\theta_n, t\sin\theta_n, a) \st{} t \geq 0 \rac.
\end{gather*}
The surface $\Sigma_{\ii}$ is the fundamental piece we are looking for. Extending $\Sigma_{\ii}$ by recursive geodesic reflections along the geodesics in its boundary.

\begin{theorem} \label{thm:saddle}
For any natural $n \geq 2$ and any $a> 0$, there exists a properly embedded minimal surface $\Ss(a, n)$ in $\Nil_3$ of genus zero, invariant by the rotation of angle $2\theta_n$ and axis $\lac x_1= x_2= 0 \rac$ and by the vertical translation of parameter $2a$. Moreover, for any $k \in \lac 0, \dots, n- 1 \rac$, $\Ss(a, n)$ contains the geodesics $\gamma_{k, 0}$ and $\gamma_{k, a}$ and is asymptotic ---~away from $\lac x_1= x_2= 0 \rac$~--- to the vertical plane containing $\gamma_{k, 0}$. We call $\Ss(a, n)$ a \emph{most symmetric Saddle tower}.
\end{theorem}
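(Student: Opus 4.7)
The plan is to extract the fundamental piece $\Sigma_\infty$ as a limit of the Plateau solutions $\Sigma_b$, complete it to $\Ss(a,n)$ by iterated Schwarz reflection along its horizontal boundary geodesics, and verify embeddedness, the stated symmetries, and the asymptotic behavior. The decisive ingredient is the barrier construction of Proposition~\ref{prop:symgraph}.

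To produce $\Sigma_\infty$, I note that $S_n$ contains the two horizontal boundary geodesics underlying $h_1(b), h_2(b)$ and is nonnegative on the open wedge $0<\theta<\theta_n$, while $a-S_n$ contains those underlying $\wt{h}_1(b), \wt{h}_2(b)$ and is at most $a$ on the same wedge. Since $\Sigma_b$ shares these geodesic arcs as part of $\Gamma_b$, a boundary maximum principle comparison applied to $\Sigma_b-S_n$ and $(a-S_n)-\Sigma_b$ in the wedge pointwise sandwiches $\Sigma_b$ between $S_n$ and $a-S_n$, yielding uniform $x_3$-bounds on every compact subset of $\Nil_3$ independent of $b$. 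Together with the mean-convexity of the euclidean hull $H_b\supset\Sigma_b$ and standard local curvature and area estimates for embedded minimal disks with free horizontal-geodesic boundary, a diagonal subsequence of $(\Sigma_b)$ converges smoothly on compacts to an embedded minimal disk $\Sigma_\infty$. Because $v_1(b), v_2(b)$ escape to infinity with $b$, boundary regularity identifies $\partial\Sigma_\infty= h_1\cup h_2\cup\wt{h}_1\cup\wt{h}_2$.

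Next, I extend $\Sigma_\infty$ by Schwarz reflection along each of its four boundary horizontal geodesics as recalled in the Schwarz symmetrization subsection, with the meeting points on the vertical axis being removable singularities (cf.~\cite{ChSc,Gu}). Since in $\Nil_3$ the reflection across a horizontal geodesic through the axis is the euclidean $\pi$-rotation about it, composing the reflection across $h_1$ with the one across $h_2$ produces the rotation by $2\theta_n$ about $\lac x_1=x_2=0 \rac$; composing the reflection across $h_1$ with the one across $\wt{h}_1$ produces the vertical translation by $2a$ (both being $\pi$-rotations about parallel horizontal lines at heights $0$ and $a$ in direction $E_1$). Iterating the four reflections therefore tiles $\Nil_3$ by $2n$ angular copies of $\Sigma_\infty$ per vertical period and yields the claimed complete smooth minimal surface $\Ss(a,n)$. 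The geodesics $\gamma_{k,0}, \gamma_{k,a}$ appear as the fixed loci of the generating reflections, hence lie in $\Ss(a,n)$.

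Embeddedness reduces to showing that two adjacent copies of $\Sigma_\infty$ glued along a reflected geodesic touch nowhere else; this follows from the Hopf boundary lemma and from the fact that each copy is confined to its own angular sector and height strip, as witnessed by the barriers $S_n$ and $a-S_n$ which strictly separate neighboring sectors. Genus zero follows inductively from the disk topology of $\Sigma_\infty$ and the arc-gluing pattern of the reflections, while the $2n$ planar ends are inherited from the limits of $v_1(b), v_2(b)$; each is asymptotic to the vertical plane through the corresponding $\gamma_{k,0}$ because $\Sigma_\infty$ is pinched between $S_n$ and $a-S_n$ as $\rho\to+\ii$. The main obstacle is clearly the convergence step: one has to produce a nontrivial smooth limit, control its boundary behavior uniformly in $b$ along the horizontal geodesics, and exclude pathologies such as collapse onto a barrier or loss of the corner at the vertical axis. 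The barrier construction of Section~\ref{sec:barrier}, through its explicit asymptotic data, is designed precisely to provide that uniform control.
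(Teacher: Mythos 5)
Your proposal matches the paper's own construction step for step: Meeks--Yau Plateau solutions on the polygonal curves $\Gamma_b$, sandwiching between the barriers $S_n$ and $a - S_n$ from Proposition~\ref{prop:symgraph} to get smooth subsequential convergence to the fundamental piece $\Sigma_\infty$ bordered by $h_1\cup h_2\cup\wt{h}_1\cup\wt{h}_2$, and iterated Schwarz reflection across those horizontal geodesics to produce $\Ss(a,n)$ with the stated rotational and translational symmetries. The level of detail you supply (the composition of $\pi$-rotations giving the $2\theta_n$-rotation and the $2a$-translation, genus zero from the disk-gluing pattern, asymptotics from the barrier pinch near the wedge edges) is consistent with, and slightly fuller than, the paper's rather terse argument, so no gap and no genuinely different route.
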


\appendix

\section{Proof of Theorem~\ref{thm:compcourb}} \label{sec:proof}

Consider a map $\eta \in \Cc^{2, \alpha}(\ov{\D})$ for some $\alpha \in (0, 1)$ ---~where $\Cc^{2, \alpha}(\ov{\D})$ denotes the usual H{\"o}lder space over $\ov{\D}$~--- and let $X^{\eta}: \D \into \Nil_3$ be the immersion given by:
\[
X^{\eta}(r, \theta)= \lp \frac{4r}{1- r^2} \cos \theta,  \frac{4r}{1- r^2} \sin \theta, \eta \frac{1+ r^2}{1- r^2} \rp,
\]
where $(r, \theta)$ are the polar coordinates on $\D$. Denoting by indexes $1, 2$ the derivatives with respect to $r, \theta$ respectively, the first derivatives of $X^{\eta}$ are:
\begin{gather*}
X^{\eta}_1(r, \theta)= \frac{4(1+ r^2)}{(1- r^2)^2} E_{\rho}+ \frac{4r}{(1- r^2)^2} \lp \eta+ \frac{(1+ r^2)\eta_1}{4r} (1- r^2) \rp E_3 \\
\text{and} \quad X^{\eta}_2(r, \theta)= \frac{4r}{1- r^2} E_{\theta}- \frac{8r^2}{(1- r^2)^2} \lp 1- \frac{(1+ r^2)\eta_2}{8r^2} (1- r^2) \rp E_3,
\end{gather*}
with $(E_{\rho}, E_{\theta}, E_3)$ denoting the cylindrical orthonormal frame in $\Nil_3$ and defined at generic point $(x_1= \rho \cos \theta, x_2= \rho \sin \theta, x_3)$ by:
\[
E_{\rho}= \cos \theta \der{x_1}+ \sin \theta \der{x_2}, \quad E_{\theta}= -\sin \theta \der{x_1}+ \cos \theta \der{x_2}+ \frac{\rho}{2} \der{x_3} \quad \text{and} \quad E_3= \der{x_3}.
\]

We then compute the first fundamental form $g(\eta)= \big{(} g_{ij}(\eta) \big{)}$:
\begin{gather*}
g_{11}(\eta)= \frac{16(1+ r^2)^2}{(1- r^2)^4} \lc \lp 1+ \frac{r^2 \eta^2}{(1+ r^2)^2} \rp+ \frac{r\eta \eta_1}{2(1+ r^2)} (1- r^2)+ \frac{\eta_1^2}{16} (1- r^2)^2 \rc, \\
g_{12}(\eta)= -\frac{32r^3}{(1- r^2)^4} \lc \eta+ \frac{1+ r^2}{4r} \lp \eta_1- \frac{\eta \eta_2}{2r} \rp (1- r^2)- \frac{(1+ r^2)^2 \eta_1 \eta_2}{32r^3} (1- r^2)^2 \rc \\
\text{and} \quad g_{22}(\eta)= \frac{16r^2(1+ r^2)^2}{(1- r^2)^4} \lc 1- \frac{\eta_2}{1+ r^2} (1- r^2)+ \frac{\eta_2^2}{16r^2} (1- r^2)^2 \rc.
\end{gather*}
And the determinant $|g(\eta)|$ of the first fundamental form writes:
\[
|g(\eta)|= \lp \frac{16r(1+ r^2)^2}{(1- r^2)^4} w(\eta) \rp^2= |g(0)|w^2(\eta),
\]
with the following expression of $w(\eta)$:
\begin{multline*}
w(\eta)= \Bigg{[} 1- \frac{\eta_2}{1+ r^2} (1- r^2)+ \lp \frac{r^2 \eta^2}{(1+ r^2)^4}+ \frac{\eta_2^2}{16} \rp (1- r^2)^2+ \frac{r\eta \eta_1}{2(1+ r^2)^3} (1- r^2)^3 \\
+\frac{\eta_1^2}{16(1+ r^2)^2} (1- r^2)^4 \Bigg{]}^{1/ 2}.
\end{multline*}
We also get the unit normal $N^{\eta}$ to $X^{\eta}$:
\begin{multline*}
N^{\eta}= \frac{1}{\sqrt{|g(\eta)|}} X^{\eta}_1 \times X^{\eta}_2= \frac{1}{w(\eta)} \Bigg{[} -\frac{r(1- r^2)}{(1+ r^2)^2} \lp \eta+ \frac{(1+ r^2)\eta_1}{4r} (1- r^2) \rp E_{\rho} \\
+\frac{2r}{1+ r^2} \lp 1- \frac{(1+ r^2) \eta_2}{8r^2} (1- r^2) \rp E_{\theta}+ \frac{1- r^2}{1+ r^2} E_3 \Bigg{]}.
\end{multline*}

Recall that the Levi-Civita connection $\del$ on $\Nil_3$ is given in the cylindrical frame $(E_{\rho}, E_{\theta}, E_3)$ by:
\[
\begin{array}{lll}
\del_{E_{\rho}} E_{\rho}= 0                       & \dst \del_{E_{\theta}} E_{\rho}= \frac{1}{\rho} E_{\theta}- \frac{1}{2} E_3 & \dst \del_{E_3} E_{\rho}= -\frac{1}{2} E_{\theta} \espf \\
\dst \del_{E_{\rho}} E_{\theta}= \frac{1}{2} E_3  & \dst \del_{E_{\theta}} E_{\theta}= -\frac{1}{\rho} E_{\rho}                 & \dst \del_{E_3} E_{\theta}= \frac{1}{2} E_{\rho}  \espf \\
\dst \del_{E_{\rho}} E_3= -\frac{1}{2} E_{\theta} & \dst \del_{E_{\theta}} E_3= \frac{1}{2} E_{\rho}                            & \del_{E_3} E_3= 0
\end{array}
\]
We compute the conormal derivatives:
\begin{gather*}
\del_{X^{\eta}_1} X^{\eta}_1= \frac{8r(3+ r^2)}{(1- r^2)^3} E_{\rho}- \frac{16r(1+ r^2)}{(1- r^2)^4} \lp \eta+ \frac{(1+ r^2)\eta_1}{4r} (1- r^2) \rp E_{\theta} \hspace{6.7em} \\
\hspace{10.1em}+ \frac{4}{(1- r^2)^3} \lp (1+ 3r^2)\eta+ 2r\eta_1(1- r^2) \rp E_3+ \frac{(1+ r^2)\eta_{11}}{1- r^2} E_3, \\
\del_{X^{\eta}_1} X^{\eta}_2= \frac{8r^2}{(1- r^2)^3} \lp \eta+ \frac{(1+ r^2)\eta_1}{4r} (1- r^2) \rp E_{\rho} \hspace{14.7em} \\
\hspace{18.7em}+ \frac{4(1+ r^2)^3}{(1- r^2)^4} \lp 1- \frac{\eta_2}{2(1+ r^2)} (1- r^2) \rp E_{\theta} \\
\hspace{11.3em}- \frac{8r(1+ r^2)}{(1- r^2)^3} \lp 1- \frac{\eta_2}{2(1+ r^2)} (1- r^2) \rp E_3+ \frac{(1+ r^2)\eta_{12}}{1- r^2} E_3 \\
\text{and} \quad \del_{X^{\eta}_2} X^{\eta}_2= -\frac{4r}{(1- r^2)^3} \lp (1+ 6r^2+ r^4)- (1+ r^2)\eta_2 (1- r^2) \rp E_{\rho}+ \frac{(1+ r^2)\eta_{22}}{1- r^2} E_3.
\end{gather*}
The Weingarten operator of $X^{\eta}$ is then determined by the following quantities:
\begin{multline*}
\lan \del_{X^{\eta}_1} X^{\eta}_1, N^{\eta} \ran= -\frac{32r^2}{w(\eta)(1- r^2)^4} \Bigg{[} \eta+ \frac{1+ r^2}{4r} \lp \eta_1- \frac{\eta\eta_2}{2r} \rp (1- r^2)- \frac{\eta_1\eta_2}{8r} (1- r^2)^2 \\
+ R'_{11}(1- r^2)^4 \Bigg{]}+ \frac{\eta_{11}}{w(\eta)},
\end{multline*}
with $R'_{11}= R'_{11}(r, \eta, D\eta)$ defined on $\ov{\D}$, identically zero if $\eta= 0$ and real-analytic in its variables. Also:
\begin{multline*}
\lan \del_{X^{\eta}_1} X^{\eta}_2, N^{\eta} \ran= \frac{32r^3}{w(\eta)(1- r^2)^4} \Bigg{[} 1- \frac{(1+ r^2)\eta_2}{4r^2} (1- r^2) \\
- \frac{1}{4} \lp \frac{\eta^2}{(1+ r^2)^2}- \frac{\eta_2^2}{4r^2} \rp (1- r^2)^2- \frac{\eta\eta_1}{8r(1+ r^2)} (1- r^2)^3+ R'_{12} (1- r^2)^4 \Bigg{]}+ \frac{\eta_{12}}{w(\eta)},
\end{multline*}
again with $R'_{12}= R'_{12}(r, \eta, D\eta)$ defined on $\ov{\D}$, zero if $\eta= 0$ and real-analytic in its variables, and:
\begin{multline*}
\lan \del_{X^{\eta}_2} X^{\eta}_2, N^{\eta} \ran= \frac{32r^4}{(1+ r^2)^2w(\eta)(1- r^2)^2} \Bigg{[} \eta+ \frac{1+ r^2}{4r} \lp \eta_1- \frac{\eta\eta_2}{2r} \rp (1- r^2) \\
+ R'_{22}(1- r^2)^4 \Bigg{]}+ \frac{\eta_{22}}{w(\eta)},
\end{multline*}
with $R'_{22}= R'_{22}(r, \eta, D\eta)$ defined on $\ov{\D}$, identically zero if $\eta= 0$ and real-analytic in its variables.

We shall now compute the mean curvature itself. Denote:
\[
H_{ij}(\eta)= g^{ij}(\eta) \lan \del_{X^{\eta}_i} X^{\eta}_j, N^{\eta} \ran,
\]
where $\big{(} g^{ij}(\eta) \big{)}$ is the inverse matrix of the metric $g(\eta)$. We get:
\begin{multline*}
H_{11}(\eta)= -\frac{2r^2}{(1+ r^2)^2 w^3(\eta)} \Bigg{[} \eta+ \frac{1+ r^2}{4r} \lp \eta_1- \frac{(1+ 10r^2+ r^4)\eta\eta_2}{2r(1+ r^2)^2} \rp (1- r^2) \\
- \frac{3\eta_2}{8r} \lp \eta_1- \frac{\eta\eta_2}{2r} \rp (1- r^2)^2+ \frac{\eta_2^2}{32r} \lp 3\eta_1- \frac{\eta\eta_2}{2r} \rp (1- r^2)^3 \\
+ R_{11}(1- r^2)^4 \Bigg{]}+ \frac{g^{11}(\eta)}{w(\eta)} \eta_{11},
\end{multline*}
with $R_{11}= R_{11}(r, \eta, D\eta)$ defined on $\ov{\D}$, identically zero if $\eta= 0$ and real-analytic in its variables, also:
\begin{multline*}
H_{12}(\eta)= \frac{4r^4}{(1+ r^2)^4 w^3(\eta)} \Bigg{[}\eta+ \frac{1+ r^2}{4r} \lp \eta_1- \frac{3\eta\eta_2}{2r} \rp (1- r^2) \\
- \frac{1}{4} \lp \frac{\eta^3}{(1+ r^2)^2}+ \frac{3\eta_1\eta_2}{2r}- \frac{3\eta\eta_2^2}{4r^2} \rp (1- r^2)^2 \\
- \frac{3}{16r} \lp \frac{\eta^2\eta_1}{1+ r^2}- \frac{\eta_1\eta_2^2}{2}- \frac{\eta^3\eta_2}{6r(1+ r^2)}+ \frac{\eta\eta_2^3}{12r} \rp (1- r^2)^3+ R_{12}(1- r^2)^4 \Bigg{]}+ \frac{g^{12}(\eta)}{w(\eta)} \eta_{12},
\end{multline*}
with $R_{12}= R_{12}(r, \eta, D\eta)$ defined on $\ov{\D}$, zero if $\eta= 0$ and real-analytic in its variables, and:
\begin{multline*}
H_{22}(\eta)= \frac{2r^2(1- r^2)^2}{(1+ r^2)^4 w^3(\eta)} \Bigg{[} \eta \lp 1+ \frac{r^2 \eta^2}{(1+ r^2)^2} \rp \\
+ \frac{r}{2} \lp \eta_1- \frac{\eta\eta_2}{2r}+ \frac{3\eta^2\eta_1}{2(1+ r^2)}- \frac{\eta^3\eta_2}{4r(1+ r^2)} \rp (1- r^2)+ R_{22}(1- r^2)^2 \Bigg{]}+ \frac{g^{22}(\eta)}{w(\eta)} \eta_{22},
\end{multline*}
with $R_{22}= R_{22}(r, \eta, D\eta)$ defined on $\ov{\D}$, zero if $\eta= 0$ and real-analytic in its variables. Finally, we obtain a Taylor expansion of the mean curvature:
\begin{align} \label{eq:taylor}
H(\eta) & = \frac{1}{2} \big{(} H_{11}(\eta)+ 2H_{12}(\eta)+ H_{22}(\eta) \big{)} \notag\\
        & = \frac{1}{2w(\eta)|g(\eta)|} \big{(} g_{22}(\eta)\eta_{11}- 2g_{12}(\eta)\eta_{12}+ g_{11}(\eta)\eta_{22} \big{)}+ R(1- r^2)^4,
\end{align}
as before with $R= R(r, \eta, D\eta)$ defined on $\ov{\D}$, identically zero if $\eta= 0$ and real-analytic in its variables.

Equation~\eqref{eq:taylor} can be written:
\begin{gather*}
H(\eta)= \frac{r}{\sqrt{|g(0)|}} \sum_{i, j} A_{ij}+ \frac{r}{\sqrt{|g(0)|}} B, \\
\text{with} \quad A_{11}= \frac{1}{2rw^3(\eta)} \frac{g_{22}(\eta)}{\sqrt{|g(0)|}}= \frac{1}{2}+ O(1- r^2) \\
A_{12}= A_{21}= -\frac{1}{2rw^3(\eta)} \frac{g_{12}(\eta)}{\sqrt{|g(0)|}}= \frac{\eta}{4}+ O(1- r^2) \\
\text{and} \quad A_{22}= \frac{1}{2rw^3(\eta)} \frac{g_{11}(\eta)}{\sqrt{|g(0)|}}= \frac{1}{2} \lp 1+ \frac{\eta^2}{4} \rp+ O(1- r^2)
\end{gather*}
Moreover, $A_{ij}= A_{ij}(r, \eta, D\eta)$ and $B= B(r, \eta, D\eta)$ are defined on $\ov{\D}$ and real-analytic in their variables, the matrix $A= (A_{ij})$ is coercive on $\ov{\D}$, and $B$ is zero when $\eta= 0$.

\newpage

\bibliographystyle{amsplain}
\providecommand{\bysame}{\leavevmode\hbox to3em{\hrulefill}\thinspace}
\providecommand{\MR}{\relax\ifhmode\unskip\space\fi MR }
% \MRhref is called by the amsart/book/proc definition of \MR.
\providecommand{\MRhref}[2]{%
  \href{http://www.ams.org/mathscinet-getitem?mr=#1}{#2}
}
\providecommand{\href}[2]{#2}

\vfill

\nidt S{\'e}bastien \textsc{Cartier}, Universit{\'e} Paris-Est, Laboratoire d'Analyse et de Math{\'e}matiques Appliqu{\'e}es (UMR 8050), UPEMLV, UPEC, CNRS, F-94010, Cr{\'e}teil, France \\
\textit{E-mail address:} \verb+sebastien.cartier@u-pec.fr+

\end{document}